\documentclass[10pt]{article}
 \usepackage[margin=1in]{geometry} 
\usepackage{amsmath,amsthm,amssymb,amsfonts}
 \usepackage{amssymb}

\numberwithin{equation}{section}

\usepackage[utf8]{inputenc}
\usepackage[pagewise]{lineno}

\usepackage{hyperref}
 \usepackage{csquotes}
\usepackage[utf8]{inputenc}
\usepackage[english]{babel}
\usepackage{xcolor}

\providecommand{\keywords}[1]
{
  \small	
  \textbf{\textit{Keywords:}} #1
}

\newcommand{\MSC}[1]{%
  \small
  \textbf{\textit{Mathematics Subject Classification:}} #1
}
\title{Global existence of classical solutions to a system modeling propagation of urban crime with quadratic logistic damping}
\author{
    Minh Le\thanks{ Westlake Institute for Advanced Study, Westlake University, 600 Dunyu
Road, 310030 Hangzhou, Zhejiang, China
 \texttt{(leminh@westlake.edu.cn)}} 
}
\date{}

\begin{document}
\maketitle

\begin{abstract}
We are concerned with the following partial differential equations arising 
from urban crime modeling:
\begin{equation} \label{main}
    \begin{cases}
        u_t = \Delta u - \chi \nabla \cdot \left( u \dfrac{\nabla v}{v} \right) - uv + B_1 + ru - \mu u^2, \\[4pt]
        v_t = \Delta v - v + uv + B_2,
    \end{cases}
\end{equation}
under no-flux boundary conditions in a smoothly bounded domain 
$\Omega \subset \mathbb{R}^n$ with $n \geq 2$, where $\chi$, $r$, and $\mu$ 
are positive constants. It is shown in this paper that if the nonnegative 
source terms $B_1$ and $B_2$ are sufficiently regular and
\begin{equation*}
    \mu > \frac{3}{n} + \frac{1}{n}\left( \chi n 
    + \frac{n(\chi(n-1)-2)^2}{4(n-1)} \right)^{\frac{n+1}{n}} 
    \cdot \left( \frac{(2n+\sqrt{n})^2}{2n-1} \right)^{\frac{1}{n}} 
    + n^{12n+3},
\end{equation*}
then the system \eqref{main} with suitably regular initial data possesses a global classical solution. Moreover, under the assumption that
\begin{equation*} \label{cond.B2}
    \inf_{t>0} \int_\Omega B_2(\cdot,t) > 0,
\end{equation*}
the solutions are uniformly bounded in time.
\end{abstract}

\keywords{urban crime,  global existence, logistic source}\\
\MSC{35K55, 35B40, 35Q91}

\numberwithin{equation}{section}
\newtheorem{theorem}{Theorem}[section]
\newtheorem{lemma}[theorem]{Lemma}
\newtheorem{remark}{Remark}[section]
\newtheorem{Prop}{Proposition}[section]
\newtheorem{Def}{Definition}[section]
\newtheorem{Corollary}{Corollary}[theorem]
\allowdisplaybreaks

\section{Introduction}

In this paper, we consider the following system of partial differential equations in a smoothly bounded domain $\Omega \subset \mathbb{R}^n$ where $n \geq 2$ :
 \begin{equation} \label{1}
      \begin{cases}
         u_t = \Delta u - \chi \nabla \cdot ( u \frac{\nabla v}{v})-uv+B_1(x,t)+ru -\mu u^2 ,  \qquad &\text{in } \Omega \times (0,\infty), \\
          v_t=  \Delta v -v+ uv+B_2(x,t), \qquad &\text{in } \Omega \times (0,\infty),  
      \end{cases}
\end{equation}
where $\chi,r,$ and $\mu$ are positive parameters. The system satisfies the homogeneous Neumann boundary condition
\begin{equation} \label{bdry}
    \frac{\partial u}{\partial \nu} =\frac{\partial v}{\partial \nu} =0 ,\qquad x \in \partial \Omega, \, t>0,
\end{equation}
and initial conditions
\begin{equation} \label{initial}
    \begin{cases}
    \displaystyle u_0 \in C^{0} (\bar{\Omega}) \text{ is nonnegative with $\int_\Omega u_0 >0$,}  \\
    \displaystyle v_0 \in W^{1, \infty}(\Omega) \;\; \text{and $v_0> 0$ in $\bar{\Omega}.$} 
    \end{cases}
\end{equation}

In the particular case where $n=2$, $\chi=2$, and $r=\mu=0$, the system \eqref{1} was originally introduced in \cite{Short2008} and \cite{Short2010} as a mathematical model describing the spatio-temporal evolution of urban crime. Within this framework, the function $u=u(x,t)$ denotes the density of criminal agents at position $x$ and time $t$. The key behavioral assumption is that these agents tend to move toward regions where an abstract quantity $v=v(x,t)$, referred to as the attractiveness value, is higher. In other words, criminal activity is modeled as being attracted to areas that are perceived as more attractive or rewarding. The fundamental hypotheses underlying the model developed in \cite{Short2008} are grounded in well-established and statistically supported behavioral patterns. These patterns are deeply connected to the so-called broken windows theory, as discussed in \cite{Kelling1982}, which posits that visible signs of disorder and neglect in an environment can encourage further criminal activity. Moreover, the model also captures phenomena that are commonly observed in criminology under the names of repeat and near-repeat victimization, as documented in \cite{Johnson1997}. These effects describe the empirical tendency for crime to recur at or near locations that have recently been victimized, reflecting a clustering behavior that the model is designed to reproduce.

Let us briefly review some mathematical results concerning the global existence and boundedness of solutions to the system \eqref{1}. In the absence of a logistic source, the existence and uniqueness of global solutions for arbitrary $\chi>0$ were established in \cite{Winkler+Rodriguez} when $n=1$. In higher dimensions $n \geq 2$, it was demonstrated in \cite{Marcel2017} that solutions exist globally in time under the restriction that $\chi< \frac{2}{n}$. Moreover, when $B_1$ and $B_2$ are constants, the global boundedness and asymptotic behavior were established in \cite{Shen+Li} under the same condition on $\chi$. Under a smallness assumption on the initial data and on $B_1$ and $B_2$, the global existence of classical solutions was demonstrated in \cite{MR4257576} in dimension two and in \cite{MR4241613} for higher dimensions. Under the radial symmetry assumption on the initial data, global renormalized solutions were constructed in \cite{MR4002172} in dimension two for any $\chi>0$, and in \cite{MR4397171} in dimension three when $\chi \in (0, \sqrt{3})$. The existence of global generalized solutions in dimension two was proven in \cite{MR4816440}.

In the presence of a logistic source, $ru- \mu u^\alpha$, it was shown in \cite{MR4094537} that the system admits a global generalized solution in dimension two when $\alpha=2$; moreover, global classical solutions exist when $\alpha>2$. This result was later improved in \cite{MR4420141}, where the authors demonstrated that global classical solutions exist when $\alpha>2$ and $n \leq 4$, and when $\alpha>1+\frac{n}{4}$ for $n \geq 5$. At this point, a very fundamental and reasonable question arises: can the quadratic logistic damping, that is, the case $\alpha=2$, ensure the global solvability of classical solutions to the system \eqref{1}? In this paper, we investigate this question and show that the answer is yes. Before stating our main theorem, let us clarify the following assumptions and definitions.

Throughout the sequel we shall assume that the given source terms $B_1$ and $B_2$ for criminal agents and attractiveness are suitably regular in the sense that 
\begin{equation} \label{B}
    \begin{cases}
        B_1 \in C^1\left ( \bar{\Omega}\times[0, \infty) \right )  \text{is nonnegative and bounded, and that }\\
        B_2\in C^2\left ( \bar{\Omega}\times[0, \infty) \right ) \cap L^\infty \left ((0, \infty); W^{1, \infty}(\Omega) \right ) \text{is nonnegative. }
    \end{cases}
\end{equation}
 We may sometimes assume further that 
\begin{equation} \label{B2}
    \inf_{t>0}\int_\Omega B_2(\cdot,t ) >0. 
\end{equation}
We define
   \begin{align}\label{mu}
      \mu_0(p)= \frac{3}{p}+ \frac{1}{p}\left ( \chi p + \frac{p(\chi(p-1)-2)^2}{4(p-1)} \right )^{\frac{p+1}{p}} \cdot \left ( \frac{(2p+\sqrt{n})^2}{2p-1} \right )^\frac{1}{p}+n^{8p+2}p^{4p+1}.
  \end{align}
  and 
  \begin{align} \label{mu*}
      \mu_*&:= \mu_0 \left ( n \right ) =  \frac{3}{n}+ \frac{1}{n}\left ( \chi n + \frac{n(\chi(n-1)-2)^2}{4(n-1)} \right )^{\frac{n+1}{n}} \cdot \left ( \frac{(2n+\sqrt{n})^2}{2n-1} \right )^\frac{1}{n}+n^{12n+3}
  \end{align}

Let us now state our main result.

  \begin{theorem} \label{thm1}
       Let $\chi>0$, $r>0$ and $\Omega \subset \mathbb{R}^n$ with $n \geq 2$ be a smoothly bounded domain, and suppose that \eqref{initial} and \eqref{B} hold. If $\mu> \mu_*$ where $\mu_*$ is defined in \eqref{mu*}, then there exists a pair $(u,v)$ of functions
    \begin{equation*}
        \begin{cases}
            u \in C^0 \left ( \bar{\Omega}\times [0, \infty) \right ) \cap C^{2,1}\left ( \bar{\Omega}\times (0, \infty) \right ),\\
            v \in C^0 \left ( \bar{\Omega}\times [0, \infty) \right ) \cap C^{2,1}\left ( \bar{\Omega}\times (0, \infty) \right ),
        \end{cases}
    \end{equation*}
    which solve \eqref{1} classically in $\Bar{\Omega}\times [0,\infty)$, and which are such that $u>0$ and $v>0$ in $\bar{\Omega}\times [0,\infty)$. Moreover, if the condition \eqref{B2} holds then the solution is bounded in the sense that 
    \begin{align} 
        \sup_{t >0} \left \{ \left \| u(\cdot,t) \right \|_{L^\infty(\Omega)}+  \left \| v(\cdot,t) \right \|_{W^{1,\infty}(\Omega)} \right \}< \infty.
    \end{align}
  \end{theorem}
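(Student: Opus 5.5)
We would follow the standard route for singular-sensitivity crime models. First we invoke local existence and extensibility (via a fixed point argument as in \cite{Marcel2017}): there is $T_{\max}\in(0,\infty]$ and a positive classical solution such that if $T_{\max}<\infty$, then $\|u(\cdot,t)\|_{L^\infty}+\|v(\cdot,t)\|_{W^{1,\infty}}\to\infty$ as $t\to T_{\max}$. Positivity of $u$ follows from the strong maximum principle, because $u_0\not\equiv0$. Positivity of $v$ comes from comparison, using $v_t\ge \Delta v-v$, which gives $v\ge e^{-t}\min v_0$. Integrating the first equation and using $\mu u^2$ yields $\int_\Omega u\le C$. Integrating the second yields $\int_\Omega v\le C(1+t)$, and even a uniform bound, since $\int_\Omega uv$ is controlled by adding the two equations: $\frac{d}{dt}\int_\Omega(u+v)+\int_\Omega v\le\int_\Omega(B_1+B_2+ru-\mu u^2)$. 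Under \eqref{B2} we also get a time-independent lower bound $v\ge\eta>0$ for $t\ge1$, by a Neumann heat semigroup estimate applied to $v_t\ge\Delta v-v+B_2$.

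The core step is an $L^p$ estimate of a coupled functional, which is where $\mu_0(p)$ enters. For $p>1$ and a parameter $q$ we consider $\frac{d}{dt}\int_\Omega u^pv^{-q}$. The $v^{-q}$ weight is chosen so that the singular cross term $\chi\int u^{p-1}v^{-q-1}\nabla u\cdot\nabla v$ and the term $\int u^pv^{-q-2}|\nabla v|^2$ combine into a quadratic form in $(v^{-q/2}\nabla u^{p/2},\,u^{p/2}\nabla v^{-q/2})$. This form is made nonpositive by choosing $q$ optimally, which is the origin of the expression $\chi p+\frac{p(\chi(p-1)-2)^2}{4(p-1)}$. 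The reaction terms are handled differently. In the $u$-equation, $-uv$ gives a good sign. In the $v$-equation, $uv$ contributes $-q\int u^{p+1}v^{-q}$, which is good. The $-v$ and $B_2$ pieces contribute $q\int u^pv^{-q}$ and a harmless term. The logistic term gives $-\mu p\int u^{p+1}v^{-q}$. The imbalance left by the nonoptimal part of the quadratic form is then absorbed by Young's inequality into $\mu\int u^{p+1}v^{-q}$ plus $\int v^{\theta}$-type terms. These last terms must be controlled via a simultaneous estimate for $\int|\nabla v|^{2p}$ or $\int v^{p+1}$, using $\frac{d}{dt}\int|\nabla v|^{2p}$ together with the pointwise inequality $\|\Delta v\|$ versus $|\nabla|\nabla v|^2|$, the $\sqrt n$ factor, and the boundary term $\partial_\nu|\nabla v|^2\le C|\nabla v|^2$ (cf.\ the factor $(2p+\sqrt n)^2/(2p-1)$). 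The coupling term $\int u^2v^2|\nabla v|^{2p-2}$ is again absorbed by the $\mu$-dissipation. The explicit large constant $n^{8p+2}p^{4p+1}$ is what compensates for Gagliardo–Nirenberg and trace constants. Taking $\mu>\mu_0(p)$ at $p=n$, which is $\mu_*$, gives a Grönwall-type inequality and hence $\sup_{t<T}\int_\Omega u^n\le C(T)$ on finite intervals. The estimate is uniform in $T$ under \eqref{B2}, because then $v$ is bounded below and the weights $v^{-q}$ are comparable to constants.

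From an $L^p$ bound on $u$ with $p=n$ we could next get $\nabla v\in L^s$ for $s$ large by semigroup estimates on the $v$-equation, since $uv$ lies in a sufficient $L^r$ once $v$ is bounded. Alternatively, and more robustly, we would run the above functional for every $p>n$ after first raising integrability slightly. A Moser–Alikakos iteration, or Neumann semigroup $L^p$–$L^\infty$ estimates applied to the $u$-equation written with drift $\chi\nabla\log v$, then gives $\|u\|_{L^\infty}\le C$. Parabolic regularity then gives $\|v\|_{W^{1,\infty}}\le C$, which excludes blow-up, so $T_{\max}=\infty$. The same chain with time-independent constants gives the boundedness claim under \eqref{B2}.

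The main obstacle is the $L^p$ step. The difficulty is closing the coupled differential inequality with only quadratic damping. The cross term involves $u\nabla v/v$ and cannot be controlled by $\int u^{p+1}$ alone. One must balance the $\nabla v$ functional against the weighted $u$ functional with explicit constants, and this is exactly what forces the complicated threshold $\mu_*$. I would first try a combined functional $\int u^pv^{-q}+\int|\nabla v|^{2p}$ with $p=n$, tracking constants explicitly.
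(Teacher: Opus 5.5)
\textbf{There is a genuine gap in your core $L^p$ step.} With the weight $v^{-q}$, set $X=u^{(p-2)/2}v^{-q/2}\nabla u$ and $Y=u^{p/2}v^{-q/2-1}\nabla v$. The diffusion, taxis and $\Delta v$ contributions to $\frac{d}{dt}\int_\Omega u^pv^{-q}$ then form $-p(p-1)|X|^2+p\bigl(2q+\chi(p-1)\bigr)X\cdot Y-q(p\chi+q+1)|Y|^2$. This is nonpositive for some $q$ only if $4q^2-4(p-1)q+p\chi^2(p-1)^2\le 0$, and the minimum of the left side over $q$ is $(p-1)^2(p\chi^2-1)$.

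So for $p\ge n$ and, e.g., $\chi>1/\sqrt{n}$, no $q$ signs the form; this is the familiar small-$\chi$ limitation of the $\int_\Omega u^pv^{-q}$ method. The expression $\chi p+\frac{p(\chi(p-1)-2)^2}{4(p-1)}$ therefore does not come from an optimal $q$. The leftover is a positive multiple of $\int_\Omega u^pv^{-q-2}|\nabla v|^2$. Young's inequality against $\mu\int_\Omega u^{p+1}v^{-q}$ leaves $\int_\Omega |\nabla v|^{2p+2}v^{-q-2p-2}$, not an $\int_\Omega v^\theta$ term.

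Your unweighted companion $\int_\Omega|\nabla v|^{2p}$ cannot absorb this. Its dissipation is $\int_\Omega|\nabla v|^{2p-2}|D^2v|^2$, and its production term $\int_\Omega u^2v^2|\nabla v|^{2p-2}$, after Young against $\int_\Omega u^{p+1}v^{-q}$, leaves $\int_\Omega v^{\gamma}|\nabla v|^{2p+2}$ with $\gamma=\frac{2(p+1+q)}{p-1}>0$. Passing between these differently weighted quantities needs $\sup v$, which you never establish; your later step ``once $v$ is bounded'' is circular. It also needs $\inf v\ge e^{-T}\min v_0$. At best the required size of $\mu$ would then depend on $T$ and the data, instead of being the fixed number $\mu_*$.

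\textbf{What the paper does instead.} The missing idea is to use the positive weight $v$ (your $q=-1$) and pair $\int_\Omega u^pv$ with Winkler's functional $\int_\Omega|\nabla v|^{2p}/v^{2p-1}$; both are homogeneous of degree one in $v$.

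The paper does not try to sign the quadratic form. Young's inequality on $p(\chi(p-1)-2)\int_\Omega u^{p-1}\nabla u\cdot\nabla v$ against $-p(p-1)\int_\Omega u^{p-2}v|\nabla u|^2$ leaves $c_1\int_\Omega u^p|\nabla v|^2/v$, with exactly $c_1=\chi p+\frac{p(\chi(p-1)-2)^2}{4(p-1)}$. The production term $uv$ now contributes $+\int_\Omega u^{p+1}v$, which is simply absorbed by $\mu p\int_\Omega u^{p+1}v$.

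The gradient functional dissipates $\frac{2p-1}{(2p+\sqrt{n})^2}\int_\Omega|\nabla v|^{2p+2}/v^{2p+1}$. This comes from the inequality $\int_\Omega|\nabla v|^{2p+2}/v^{2p+1}\le(2p+\sqrt{n})^2\int_\Omega v^{3-2p}|\nabla v|^{2p-2}|D^2\ln v|^2$, which is the origin of that factor, not the boundary term. Its $u$-dependent terms are bounded by $n^{8p+2}p^{4p+2}\int_\Omega u^{p+1}v$.

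Because of the homogeneity, every Young splitting lands exactly on $\int_\Omega u^{p+1}v$ and $\int_\Omega|\nabla v|^{2p+2}/v^{2p+1}$, with constants depending only on $n,p,\chi$. The lower bound for $v$ enters only additive constants.

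Finally, the paper takes $p>n$ with $\mu>\mu_0(p)$ by continuity of $\mu_0$, rather than working at $p=n$. Then $\int_\Omega|\nabla v|^{2p}/v^{2p-1}=(2p)^{2p}\int_\Omega|\nabla v^{1/(2p)}|^{2p}$ with $2p>n$ gives $\|v\|_{L^\infty}$ by Sobolev embedding. Together with $\int_\Omega u^p\le\delta^{-1}\int_\Omega u^pv$, this puts $uv+B_2$ in $L^\infty((0,T);L^p(\Omega))$ with $p>n$. That yields the $W^{1,\infty}$ bound for $v$, and Moser iteration finishes.
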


\begin{remark}
   Our result substantially improves the findings of \cite{MR4094537} and \cite{MR4420141}. In \cite{MR4094537}, global solvability of generalized solution was established only in dimension two and only for the quadratic case $\alpha=2$. In \cite{MR4420141}, the authors required the stronger damping condition $\alpha>\min\left\{2,\,1+\frac{n}{4}\right\}$. In contrast, the present work establishes global classical solvability for the quadratic case $\alpha=2$ in all dimensions $n\ge 2$, thereby covering the full range of dimensions and removing the restriction $\alpha>1+\frac{n}{4}$ entirely.
\end{remark} 

\begin{remark}
Theorem \ref{thm1} not only asserts the global existence and boundedness of solutions to the system \eqref{1} when $\mu$ is sufficiently large, but also provides a quantified estimate for how large $\mu$ must be. However, we leave open the question of determining the smallest value $\mu_*^{opt}= \mu_*^{opt}(n, \chi)>0$ such that if $\mu > \mu_*^{opt}$, then solutions are global.  
\end{remark}

The main challenges in proving Theorem \ref{thm1} are twofold. The first difficulty arises from the singular sensitivity of the chemo-attractant term, while the second stems from the nonlinear signal production term. These obstacles prevent the use of traditional approaches; for instance, if we consider the energy functional $\int_\Omega u^p$, it is impossible to handle the singularity of $v$ near zero that originates from the chemo-attractant term. Our approach offers a simple way to overcome these obstacles by considering the energy functional
\begin{align} \label{F}
    F(t):= \int_\Omega u^p(\cdot,t)v(\cdot,t)+ \int_\Omega \frac{|\nabla v(\cdot,t)|^{2p}}{v^{2p-1}(\cdot,t)} \qquad \text{for all }t>0. 
\end{align}
The second term in the functional $F$ was first introduced in \cite{Winkler2022} in the context of chemotaxis-consumption with singular sensitivity, and has since been used in various chemotaxis problems, such as \cite{Liu2025, Black2025, Li2022,TSD2026}. The first term, on the other hand, is inspired by a method developed in \cite{Wang2019}, where the author deals with a chemotaxis system with singular sensitivity and logistic source in dimension two.

The remainder of the paper is organized as follows. In Section \ref{S2}, we recall the local existence theory for system \eqref{1} and introduce several inequalities that will be used in later sections. Section \ref{S3} is devoted to establishing several key estimates of solutions, which are then utilized in Section \ref{S4} to prove the global existence and boundedness of solutions.
\section{Preliminaries} \label{S2}
In this section, we shall establish the local existence of solutions to 
system \eqref{1} and recall several useful inequalities, which will be 
applied in the sequel sections. Let us begin with the local 
well-posedness result in the following lemma.
\begin{lemma} \label{local}
    Let $\chi,r,\mu$ be positive constants and $\Omega \subset \mathbb{R}^n$ with $n \geq 2$ be a smoothly bounded domain, and suppose that \eqref{initial} and \eqref{B} hold. Then there exist $T_{\rm max} \in  (0,\infty]$ and a pair $(u,v)$ of functions
    \begin{equation*}
        \begin{cases}
            u \in C^0 \left ( \bar{\Omega}\times [0, T_{\rm max}) \right ) \cap C^{2,1}\left ( \bar{\Omega}\times (0, T_{\rm max}) \right ),\\
            v \in \bigcap_{p>n} C^0\left ( [0,T_{\rm max});W^{1,p}(\Omega) \right ) \cap C^{2,1}\left ( \bar{\Omega}\times (0, T_{\rm max}) \right ),
        \end{cases}
    \end{equation*}
    which solve \eqref{1} classically in $\Bar{\Omega}\times [0,T_{\rm max})$, and which are such that $u>0$ and $v>0$ in $\bar{\Omega}\times [0,T_{\rm max})$, and that 
    \begin{align} \label{local-1}
        \text{if }T_{\rm max}< \infty \qquad \text{then }
        \limsup_{t \to T_{\rm max}} \left \{ \left \| u(\cdot,t) \right \|_{L^\infty(\Omega)}+  \left \| \frac{1}{v(\cdot,t)} \right \|_{L^\infty(\Omega)} +  \left \| v(\cdot,t) \right \|_{W^{1,p}(\Omega)} \right \}= \infty \quad \text{for all }p>n.
    \end{align}
\end{lemma}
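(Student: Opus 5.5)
We prove Lemma \ref{local} by a Banach fixed point argument of the kind used for chemotaxis systems with singular sensitivity (as in \cite{Winkler+Rodriguez, Marcel2017}), adapted to the additional logistic and nonlinear production terms. Fix $p>n$ and $R>0$ with $\|u_0\|_{L^\infty}+\|v_0\|_{W^{1,p}}<R$. Since $v_0\in W^{1,\infty}(\Omega)$ is continuous and positive on $\bar\Omega$, we have $\delta:=\frac12\min_{\bar\Omega}v_0>0$. For small $T\in(0,1)$ we work in the closed set
\[
S_T:=\Big\{(u,v)\in C^0(\bar\Omega\times[0,T])\times C^0([0,T];W^{1,p}(\Omega)) : \|u\|_{L^\infty}\le R+1,\ \|v(\cdot,t)\|_{W^{1,p}}\le R+1,\ v\ge \delta\Big\}.
\]
On $S_T$ we define $\Phi(u,v)=(\tilde u,\tilde v)$ by the variation-of-constants formulas
\[
\tilde u(t)=e^{t\Delta}u_0-\chi\int_0^t e^{(t-s)\Delta}\nabla\cdot\Big(u\frac{\nabla v}{v}\Big)ds+\int_0^t e^{(t-s)\Delta}\big(-uv+B_1+ru-\mu u^2\big)ds,
\]
\[
\tilde v(t)=e^{t(\Delta-1)}v_0+\int_0^t e^{(t-s)(\Delta-1)}(uv+B_2)\,ds .
\]

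\emph{Self-mapping and contraction.} We use the standard Neumann heat semigroup estimates. The first is
\[
\|e^{t\Delta}\nabla\cdot w\|_{L^\infty}\le C\big(1+t^{-\frac12-\frac{n}{2p}}\big)\|w\|_{L^p},
\]
which applies because $p>n$ makes the exponent integrable. The second is $\|\nabla e^{t(\Delta-1)}f\|_{L^p}\le Ct^{-1/2}\|f\|_{L^p}$. On $S_T$ we have the bound $\|u\nabla v/v\|_{L^p}\le (R+1)^2/\delta$. Moreover $v\in W^{1,p}\hookrightarrow C^0$, so $uv$ is bounded. Together these show that $\tilde u$ and $\tilde v$ stay within $T^{\theta}C(R,\delta)$ of their initial terms, for some $\theta>0$.

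For the lower bound on $\tilde v$, we use $uv+B_2\ge0$ together with the comparison principle: $\tilde v\ge e^{-t}\min v_0\ge\delta$ once $T\le\ln 2$. The same estimates applied to differences yield a Lipschitz constant of order $T^\theta$. Here the key point is that $1/v$ is Lipschitz on $\{v\ge\delta\}$, and that the nonlinear terms are locally Lipschitz on bounded sets.

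\emph{Main obstacle and continuation.} The step needing the most care is controlling the singular quotient $\nabla v/v$ uniformly. This is exactly why the blow-up alternative must include $\|1/v\|_{L^\infty}$: the existence time $T$ depends only on $R$ and $\delta$, that is, only on upper bounds for $\|u\|_{L^\infty}+\|v\|_{W^{1,p}}$ and for $\|1/v\|_{L^\infty}$.

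\emph{Remaining steps.}
\begin{enumerate}
\item \textbf{Regularity.} Parabolic Schauder theory gives $C^{2,1}(\bar\Omega\times(0,T))$ regularity. We bootstrap: $v$ is H\"older in space-time, and the $u$-equation, written in divergence form, gives H\"older continuity of $u$. Hence $\nabla v$ is H\"older, and the regularity of $B_1,B_2$ from \eqref{B} then upgrades both components.
\item \textbf{Positivity.} Once regularity is available, positivity follows from the strong maximum principle. For $u$ this uses $\int_\Omega u_0>0$ and $B_1\ge0$. The equation $u_t=\Delta u+b\cdot\nabla u+c\,u+B_1$ has bounded coefficients and $u_0\ge0$, $u_0\not\equiv0$, so $u>0$ for $t>0$. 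The statement of Lemma \ref{local} asserts $u>0$ on $\bar\Omega\times[0,T_{\rm max})$; at $t=0$ this can only hold where $u_0>0$, so we read the claim as strict positivity for $t>0$.
\item \textbf{Uniqueness.} A Gronwall argument on differences in the same norms gives uniqueness of the solution.
\item \textbf{Extension and blow-up alternative.} Uniqueness lets us extend the solution to a maximal interval. If the quantity in \eqref{local-1} stayed bounded near a finite $T_{\rm max}$, we could restart the fixed point argument with a uniform existence time, which contradicts maximality. This gives \eqref{local-1}.
\item \textbf{Independence of $p$.} The solution does not depend on $p>n$, again by uniqueness.
\end{enumerate}
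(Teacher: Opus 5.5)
Your Banach fixed-point sketch is the standard chemotaxis-type local existence argument that the paper does not carry out but only invokes by reference (Amann's theory and the usual fixed-point method), so the route is the same; your remark that $u>0$ cannot hold at $t=0$ wherever $u_0$ vanishes is a correct observation about the statement. The one step that fails as written is the lower bound $\tilde v\ge\delta$: your set $S_T$ does not impose $u\ge 0$, so $uv+B_2\ge 0$ is not available there, but the bound still follows after shrinking $T$, since $e^{t(\Delta-1)}v_0\ge e^{-t}\min_{\bar\Omega}v_0$ and $\|\int_0^t e^{(t-s)(\Delta-1)}(uv+B_2)\,ds\|_{L^\infty(\Omega)}\le C(R)\,T$ on $S_T$ (similarly, choose $R$ above $\sup_{t\in(0,1)}\|e^{t(\Delta-1)}v_0\|_{W^{1,p}(\Omega)}$, because the Neumann heat semigroup need not be a contraction in the $W^{1,p}$ norm on non-convex $\Omega$).
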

\begin{proof}
    This can be verified in a straightforward fashion using a standard method from the theory of cross-diffusive systems, notably those of chemotaxis type (see~\cite{Amann1989} and also~\cite{DM}).
\end{proof}
Henceforth, $(u,v)$ will always denote a solution of system \eqref{1} on 
$\Omega \times (0,T_{\rm max})$, whose existence is guaranteed by Lemma 
\ref{local}. We proceed by recalling a standard $L^p$ regularity estimate 
for parabolic equations, the proof of which can be found in 
\cite{DM}[Lemma 4.1].
\begin{lemma} \label{C52.Para-Reg}
Let $\Omega \subset \mathbb{R}^n$ with $n \geq 2$ be a bounded domain with smooth boundary. Assume that $p \geq 1$ and $q \geq 1$ satisfy
\begin{equation*}
    \begin{cases}
     q < \frac{np}{n-p}, & \text{when } p<n, \\[4pt]
     q < \infty, & \text{when } p=n, \\[4pt]
     q = \infty, & \text{when } p>n.
     \end{cases}
\end{equation*}
Suppose that $V_0 \in W^{1,q}(\Omega)$ and that $V$ is a classical solution to the following system:
\begin{equation}\label{C52.parabolic-equation}
    \begin{cases}
     V_t = \Delta V - a V + f, & \text{in } \Omega \times (0,T), \\[4pt]
     \dfrac{\partial V}{\partial \nu} = 0, & \text{on } \partial \Omega \times (0,T), \\[4pt]
     V(\cdot,0) = V_0, & \text{in } \Omega,
    \end{cases}
\end{equation}
where $a>0$ and $T\in (0,\infty]$. If $f \in L^\infty \bigl( (0,T); L^p(\Omega) \bigr)$, then $V \in L^\infty \bigl( (0,T); W^{1,q}(\Omega) \bigr)$.
\end{lemma}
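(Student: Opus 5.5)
The approach is the standard semigroup route. I would write $V$ via the variation-of-constants formula for the Neumann heat semigroup $(e^{t(\Delta - a)})_{t\ge0}$:
\begin{align*}
V(\cdot,t) = e^{t(\Delta-a)}V_0 + \int_0^t e^{(t-s)(\Delta-a)} f(\cdot,s)\,ds, \qquad t\in(0,T).
\end{align*}
This representation is justified because $V$ is a classical solution. The claim then reduces to bounding $\|\nabla V(\cdot,t)\|_{L^q(\Omega)}$ and $\|V(\cdot,t)\|_{L^q(\Omega)}$ uniformly in $t\in(0,T)$, with constants independent of $T$. Independence from $T$ matters here, since $T=\infty$ is allowed.

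For the homogeneous part, I would use the known gradient estimate
\begin{align*}
\|\nabla e^{t\Delta} w\|_{L^q(\Omega)} \le C\|\nabla w\|_{L^q(\Omega)} \quad \text{for } w\in W^{1,q}(\Omega).
\end{align*}
For $q=\infty$ this is valid for Lipschitz $w$. Combined with the factor $e^{-at}$, it gives $\|e^{t(\Delta-a)}V_0\|_{W^{1,q}}\le C\|V_0\|_{W^{1,q}}$ for all $t>0$.

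For the Duhamel part, I would apply the smoothing estimate of Winkler type,
\begin{align*}
\|\nabla e^{\tau\Delta} w\|_{L^q(\Omega)} \le C\Bigl(1+\tau^{-\frac12-\frac n2\left(\frac1p-\frac1q\right)}\Bigr)e^{-\lambda_1\tau}\|w\|_{L^p(\Omega)}.
\end{align*}
Together with the analogous $L^p$--$L^q$ estimate for $e^{\tau\Delta}w$ itself, this gives
\begin{align*}
\Bigl\|\int_0^t e^{(t-s)(\Delta-a)}f(s)\,ds\Bigr\|_{W^{1,q}} \le C\sup_{s}\|f(s)\|_{L^p}\int_0^\infty \Bigl(1+\tau^{-\frac12-\frac n2\left(\frac1p-\frac1q\right)}\Bigr)e^{-a\tau}\,d\tau .
\end{align*}

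The main point is checking that this integral converges. The factor $e^{-a\tau}$ with $a>0$ handles $\tau\to\infty$ and gives $T$-independence. Near $\tau=0$ the singularity is integrable exactly when $\frac12+\frac n2\left(\frac1p-\frac1q\right)<1$, that is, $\frac1q>\frac1p-\frac1n$. This translates into the three listed cases:
\begin{itemize}
\item if $p<n$, the condition is $q<\frac{np}{n-p}$;
\item if $p=n$, any finite $q$ works;
\item if $p>n$, one may take $q=\infty$ because $\frac1p-\frac1n<0$.
\end{itemize}
The endpoint $q=\infty$ needs the semigroup estimate to hold with $q=\infty$, which it does for $p>n$.

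Combining the homogeneous and Duhamel bounds gives $V\in L^\infty((0,T);W^{1,q}(\Omega))$. The only delicate step is the borderline behaviour of the semigroup estimates at $q=\infty$ and near $\tau=0$, and both are covered by the cited smoothing estimates.
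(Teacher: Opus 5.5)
The paper gives no argument of its own for this lemma and only refers to Lemma 4.1 of \cite{DM}. Your variation-of-constants argument with Neumann heat semigroup smoothing estimates is the standard proof of this regularity result and is correct; it is often phrased instead via fractional powers of $-\Delta+a$ and the embedding $D((-\Delta+a)^\beta)\hookrightarrow W^{1,q}(\Omega)$ for $\beta>\frac12+\frac n2(\frac1p-\frac1q)$, which leads to the same integrability requirement $\beta<1$.

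Two small points deserve a word. When $q<p$, first reduce to $q=p$ via $L^p(\Omega)\hookrightarrow L^q(\Omega)$, because the gradient smoothing estimate is stated only for $p\le q$. Also, the uniform bound $\|\nabla e^{t\Delta}w\|_{L^\infty(\Omega)}\le C\|\nabla w\|_{L^\infty(\Omega)}$ on non-convex domains is a genuine (though known) input; it rests on the boundary inequality $\partial_\nu|\nabla w|^2\le C_\Omega|\nabla w|^2$ on $\partial\Omega$ whenever $\partial_\nu w=0$.
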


Next, we introduce the following inequalities which allow us to control the singularity of $v$ near $0$.

\begin{lemma} \label{Lw2}
For any $\varphi \in C^2(\bar{ \Omega  })$ such that $\varphi>0$ in $\bar{\Omega}$ and $\frac{\partial \varphi}{\partial \nu}=0$ on $\partial \Omega$, the following inequalities hold for any $q \geq 2,$
\begin{align}
\label{Lw3-0}
    \int_\Omega \frac{|\nabla \varphi|^{q+2}}{\varphi^{q+1}}  \leq (q+ \sqrt{n})^2\int_\Omega \frac{|\nabla \varphi|^{q-2}}{\varphi^{q-3}} |D^2 \ln \varphi|^2,
\end{align}
and
\begin{align}
\label{Lw3-1}
    \int_\Omega \frac{|\nabla \varphi|^{q-2}}{\varphi^{q-1}} |D^2 \varphi|^2 \leq (q+\sqrt{n}+1)^2\int_\Omega \frac{|\nabla \varphi|^{q-2}}{\varphi^{q-3}} |D^2 \ln \varphi|^2,
\end{align}
as well as
\begin{align}
\label{Lw3}
    \int_{ \partial \Omega} \frac{|\nabla \varphi|^{q-2}}{\varphi^{q-1}}   \frac{\partial |\nabla \varphi|^2}{\partial \nu } \leq \epsilon \int_\Omega \frac{|\nabla \varphi|^{q-2}}{\varphi^{q-3}} |D^2 \ln \varphi|^2  + C\int_\Omega \varphi,
\end{align}
where $\epsilon>0$ is an arbitrary number, and $C=C(\epsilon)$ is a positive constant.
\end{lemma}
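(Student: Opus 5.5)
Throughout, $\varphi>0$, so $\ln\varphi$ is smooth and I write $\nabla\ln\varphi=\nabla\varphi/\varphi$. The key identity is
\begin{align*}
\varphi\, D^2\ln\varphi = D^2\varphi-\frac{\nabla\varphi\otimes\nabla\varphi}{\varphi}.
\end{align*}
The weight $\frac{|\nabla\varphi|^{q-2}}{\varphi^{q-3}}|D^2\ln\varphi|^2$ equals $|\nabla\varphi|^{q-2}\varphi^{-(q-1)}\,|\varphi D^2\ln\varphi|^2$, so all three estimates compare weighted norms of $D^2\varphi$ and of $\nabla\varphi\otimes\nabla\varphi/\varphi$ with the weighted norm of their difference. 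These are Winkler-type functional inequalities in the spirit of \cite{Winkler2022}, and I will follow that strategy.

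\textbf{Step 1: inequality \eqref{Lw3-0}.} I will integrate by parts in
\begin{align*}
I:=\int_\Omega \frac{|\nabla\varphi|^{q+2}}{\varphi^{q+1}}=\int_\Omega |\nabla\varphi|^{q}\,\nabla\varphi\cdot\nabla\Big(-\frac{1}{q\varphi^{q}}\Big).
\end{align*}
Since $\partial_\nu\varphi=0$, the boundary term vanishes, and
\begin{align*}
I=\frac1q\int_\Omega\varphi^{-q}\nabla\cdot(|\nabla\varphi|^q\nabla\varphi).
\end{align*}
Expanding the divergence gives terms bounded by $|\nabla\varphi|^q\big(q|D^2\varphi|+|\Delta\varphi|\big)$, with $|\Delta\varphi|\le\sqrt n|D^2\varphi|$. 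I will then substitute $D^2\varphi=\varphi D^2\ln\varphi+\nabla\varphi\otimes\nabla\varphi/\varphi$.

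The $\nabla\varphi\otimes\nabla\varphi/\varphi$ part reproduces $I$ with the exact coefficient $\frac{q+1}{q}$. I will keep it exactly rather than bound it, because it has to be absorbed: it contributes $\frac{q+1}{q}I$ with a sign, so the identity reads
\begin{align*}
I=-\frac{q+1}{q}I-\frac1q\int_\Omega\varphi^{-q+1}|\nabla\varphi|^{q-2}\big(q\,\nabla\varphi\cdot D^2\ln\varphi\,\nabla\varphi+|\nabla\varphi|^2\Delta\ln\varphi\big).
\end{align*}
This implies
\begin{align*}
\frac{2q+1}{q}\,I\le\frac{q+\sqrt n}{q}\int_\Omega\frac{|\nabla\varphi|^{q+1}}{\varphi^{q-1}}|D^2\ln\varphi|.
\end{align*}
Applying Cauchy--Schwarz, the right-hand integral is at most $\sqrt{I}\,\big(\int_\Omega|\nabla\varphi|^{q-2}\varphi^{3-q}|D^2\ln\varphi|^2\big)^{1/2}$. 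The result is $I\le\big(\frac{q+\sqrt n}{2q+1}\big)^2 J$, where $J$ is the integral on the right of \eqref{Lw3-0}. This is stronger than \eqref{Lw3-0}, so the stated constant $(q+\sqrt n)^2$ is generous.

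\textbf{Step 2: inequality \eqref{Lw3-1}.} From the identity,
\begin{align*}
|D^2\varphi|\le\varphi|D^2\ln\varphi|+\frac{|\nabla\varphi|^2}{\varphi}.
\end{align*}
Multiplying by the weight and using Minkowski's inequality in weighted $L^2$, I get
\begin{align*}
\Big(\int_\Omega\frac{|\nabla\varphi|^{q-2}}{\varphi^{q-1}}|D^2\varphi|^2\Big)^{1/2}\le J^{1/2}+I^{1/2}.
\end{align*}
With Step 1 this gives the bound $(1+q+\sqrt n)^2J$, which is exactly the stated constant.

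\textbf{Step 3: the boundary estimate \eqref{Lw3}.} This is the main obstacle. I will use the standard fact that $\partial_\nu|\nabla\varphi|^2\le C_\Omega|\nabla\varphi|^2$ on $\partial\Omega$ for Neumann functions, with $C_\Omega$ depending on the curvature. This reduces the task to bounding $\int_{\partial\Omega}|\nabla\varphi|^q\varphi^{1-q}$. I will write this as $\int_{\partial\Omega}w^2$ with
\begin{align*}
w:=|\nabla\varphi|^{q/2}\varphi^{(1-q)/2},
\end{align*}
and use the trace inequality $\|w\|_{L^2(\partial\Omega)}^2\le\eta\|\nabla w\|_{L^2(\Omega)}^2+C_\eta\|w\|_{L^2(\Omega)}^2$.

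Next I estimate $|\nabla w|^2$. Computing the gradient gives
\begin{align*}
|\nabla w|^2\lesssim q^2\,|\nabla\varphi|^{q-2}\varphi^{1-q}|D^2\varphi|^2+q^2\,|\nabla\varphi|^{q+2}\varphi^{-q-1}.
\end{align*}
By Steps 1 and 2, $\int_\Omega|\nabla w|^2\le C_qJ$, so choosing $\eta$ small yields $\epsilon J$.

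The remaining term is $\int_\Omega w^2=\int_\Omega|\nabla\varphi|^q\varphi^{1-q}$. I will interpolate it by H\"older's inequality between $I=\int_\Omega|\nabla\varphi|^{q+2}\varphi^{-q-1}$ and $\int_\Omega\varphi$. Writing $|\nabla\varphi|^q\varphi^{1-q}=\big(|\nabla\varphi|^{q+2}\varphi^{-q-1}\big)^{\theta}\varphi^{1-\theta}$ with $\theta=\frac{q}{q+2}$, the exponents of $\varphi$ match, since $-(q+1)\theta+(1-\theta)=1-q$. Young's inequality then gives $\int_\Omega w^2\le\delta I+C_\delta\int_\Omega\varphi$. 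Finally, Step 1 converts $\delta I$ into $\delta J$, which completes \eqref{Lw3}.
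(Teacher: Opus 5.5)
Your strategy is sound, and it differs from the paper only in being self-contained. The paper cites Lemma 3.4 of \cite{Winkler2022} for \eqref{Lw3-0} and \eqref{Lw3-1}. For \eqref{Lw3} it cites Lemma 3.5 there, which bounds the boundary integral by $\eta J+\eta I+C\int_\Omega\varphi$, and then absorbs $\eta I$ using \eqref{Lw3-0}.

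Your Step 3 rebuilds that argument from scratch and is correct. It uses the boundary bound $\partial_\nu|\nabla\varphi|^2\le C_\Omega|\nabla\varphi|^2$, a trace inequality with small $\eta$ (valid since $w$ is Lipschitz for $q\ge 2$), and Young interpolation of $\int_\Omega|\nabla\varphi|^q\varphi^{1-q}$ between $I$ and $\int_\Omega\varphi$. Step 2 is also correct.

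\textbf{Step 1 has a sign error.} From $I=\frac1q\int_\Omega\varphi^{-q}\nabla\cdot(|\nabla\varphi|^q\nabla\varphi)$ and your substitution, the $\nabla\varphi\otimes\nabla\varphi/\varphi$ part contributes $+\frac{q+1}{q}I$. The remaining part also enters with a plus sign. Moving $\frac{q+1}{q}I$ to the left leaves $-\frac1q I$, so the correct identity is
\[
I=-\int_\Omega \varphi^{1-q}|\nabla\varphi|^{q-2}\big(q\,\nabla\varphi\cdot D^2\ln\varphi\,\nabla\varphi+|\nabla\varphi|^2\Delta\ln\varphi\big),
\]
not $I=-\frac{q+1}{q}I-\dots$. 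You can check it in one dimension, where it reads $I=-(q+1)\int\varphi^{1-q}|\varphi'|^q(\ln\varphi)''$; integrating $(|\varphi'|^q\varphi')'$ by parts confirms this.

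Consequently
\[
I\le (q+\sqrt n)\int_\Omega \frac{|\nabla\varphi|^{q}}{\varphi^{q-1}}|D^2\ln\varphi|\le (q+\sqrt n)\sqrt{IJ},
\]
and hence $I\le (q+\sqrt n)^2 J$, which is exactly the stated constant. The exponent of $|\nabla\varphi|$ here is $q$, not $q+1$ as you wrote; $q$ is also what your Cauchy--Schwarz splitting actually produces.

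The conclusion of Step 1 therefore survives, and Steps 2 and 3 need no change, since they only use \eqref{Lw3-0} with the stated constant. However, your claim that the constant improves to $\big(\frac{q+\sqrt n}{2q+1}\big)^2$ is false. Take $q=2$ and $\varphi=e^{Af}$, where $\partial_\nu f=0$ and $f$ has a unique nondegenerate interior maximum with Hessian $-kI$. Then
\[
I=A^4\int_\Omega e^{Af}|\nabla f|^4,\qquad J=A^2\int_\Omega e^{Af}|D^2 f|^2,
\]
and Laplace's method gives $I/J\to n+2$ as $A\to\infty$. This exceeds $\frac{(2+\sqrt n)^2}{25}$ in every dimension.

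In short, the paper's citation route buys brevity. Yours makes the constants and their dependence on $q$, $\Omega$ and $\epsilon$ explicit, once the sign in Step 1 is corrected.
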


\begin{proof}
    The proof of \eqref{Lw3-0} and \eqref{Lw3-1} is provided in Lemma 3.4 in \cite{Winkler2022}. By applying Lemma 3.5 in \cite{Winkler2022} with $\eta = \frac{\epsilon}{1+(q+\sqrt{n})^2}$, we can find $C=C(\epsilon,q)>0$ such that 
    \begin{align}
\label{Lw2.1}
    \int_{ \partial \Omega} \frac{|\nabla \varphi|^{q-2}}{\varphi^{q-1}}   \frac{\partial |\nabla \varphi|^2}{\partial \nu } \leq \eta  \int_\Omega \frac{|\nabla \varphi|^{q-2}}{\varphi^{q-3}} |D^2 \ln \varphi|^2 +\eta  \int_\Omega \frac{|\nabla \varphi|^{q+2}}{\varphi^{q+1}}  + C\int_\Omega \varphi.
\end{align}
From \eqref{Lw3-0}, we obtain 
\begin{align*}
    \eta  \int_\Omega \frac{|\nabla \varphi|^{q+2}}{\varphi^{q+1}} \leq \eta (q+\sqrt{n})^2 \int_\Omega \frac{|\nabla \varphi|^{q-2}}{\varphi^{q-3}} |D^2 \ln \varphi|^2.
\end{align*}
Substituting this into \eqref{Lw2.1} yields
\begin{align*}
    \int_{ \partial \Omega} \frac{|\nabla \varphi|^{q-2}}{\varphi^{q-1}}   \frac{\partial |\nabla \varphi|^2}{\partial \nu } &\leq  \eta \left (1+ (q+\sqrt{n})^2 \right )^2  \int_\Omega \frac{|\nabla \varphi|^{q-2}}{\varphi^{q-3}} |D^2 \ln \varphi|^2  + C\int_\Omega \varphi \notag \\
    &\leq \epsilon\int_\Omega \frac{|\nabla \varphi|^{q-2}}{\varphi^{q-3}} |D^2 \ln \varphi|^2  + C\int_\Omega \varphi,
\end{align*}
which finishes the proof.
\end{proof}

\section{A priori estimates} \label{S3}

Let us commence this section by establishing the following lower bound for $v$, 
where we emphasize that the convexity assumption of $\Omega$ is not needed.
\begin{lemma} \label{low}
    For any $T\in (0,T_{\rm max})$, we have 
    \begin{align}\label{low-1}
        v(x,t) \geq e^{-T} \cdot \inf_{y\in \Omega} v_0(y) \qquad \text{for all }x\in \Omega \quad \text{and }t\in (0,T).
    \end{align}
    Moreover, if the condition \eqref{B2} holds then there exists $\delta>0$ depending only on $\inf_{x\in \Omega }v_0(x), \inf_{t>0}\int_\Omega B_2(\cdot,t)$ and $\Omega$ such that 
    \begin{align} \label{low-2}
         v(x,t) \geq \delta  \qquad \text{for all }x\in \Omega \quad \text{and }t\in (0,T_{\rm max}).
    \end{align}
\end{lemma}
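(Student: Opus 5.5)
Both parts will come from comparison for the $v$-equation, using $u\ge0$ and $B_2\ge0$. Since $u>0$, $v>0$ and $B_2\ge 0$, we have
\begin{align*}
v_t\ge \Delta v - v \qquad\text{in }\Omega\times(0,T_{\rm max}),
\end{align*}
with homogeneous Neumann data. By the comparison principle (equivalently, the order preservation of the Neumann heat semigroup),
\begin{align*}
v(\cdot,t)\ge e^{-t}e^{t\Delta}v_0\ge e^{-t}\inf_{\Omega} v_0 .
\end{align*}
This uses only the maximum principle, since $e^{t\Delta}$ preserves constant lower bounds, so no convexity of $\Omega$ is needed. For $t<T$ this gives \eqref{low-1}.

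For \eqref{low-2} I would use Duhamel's formula, which gives
\begin{align*}
v(\cdot,t)\ge e^{-t}e^{t\Delta}v_0+\int_0^t e^{-(t-s)}e^{(t-s)\Delta}B_2(\cdot,s)\,ds ,
\end{align*}
again dropping the nonnegative term $uv$. The key ingredient is the standard pointwise lower bound for the Neumann heat kernel on a smooth bounded domain: there is $c_\Omega>0$ such that $e^{\tau\Delta}w\ge c_\Omega\int_\Omega w$ for all $\tau\ge1$ and all nonnegative $w$. This follows from the Gaussian lower bound on the kernel at time $1$ combined with the semigroup property. Set $\beta:=\inf_{t>0}\int_\Omega B_2(\cdot,t)>0$. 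Restricting the integral to $s\in(0,t-1)$ when $t\ge2$ then gives
\begin{align*}
v(\cdot,t)\ge c_\Omega\beta\int_1^{t} e^{-\sigma}\,d\sigma\ge c_\Omega\beta\,(e^{-1}-e^{-2}) .
\end{align*}
For $t\in(0,2)$, the first part gives $v\ge e^{-2}\inf v_0$. Taking $\delta:=\min\{e^{-2}\inf v_0,\;c_\Omega\beta(e^{-1}-e^{-2})\}$ yields \eqref{low-2}, with $\delta$ depending only on $\inf v_0$, $\beta$ and $\Omega$.

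The main obstacle is justifying the heat kernel lower bound $e^{\tau\Delta}w\ge c_\Omega\int_\Omega w$ for $\tau\ge1$ without any convexity assumption. I would take it from the literature (Gaussian bounds for Neumann heat kernels on smooth bounded domains, or strict positivity plus continuity of the kernel on the compact set $\bar\Omega\times\bar\Omega$ at $\tau=1$). The extension to all $\tau\ge1$ is then immediate: write $e^{\tau\Delta}w=e^{\Delta}(e^{(\tau-1)\Delta}w)$ and use mass conservation $\int_\Omega e^{(\tau-1)\Delta}w=\int_\Omega w$.
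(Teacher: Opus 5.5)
Your proof is correct and follows essentially the same route as the paper: the maximum principle (comparison with $e^{-t}\inf v_0$) gives \eqref{low-1}, and for \eqref{low-2} you run the Duhamel/Neumann-heat-kernel lower-bound argument of the cited Lemma 3.1 in \cite{FS2018}, with $B_2$, whose spatial mass is bounded below, as the source term. The paper only cites that lemma and asserts that convexity is not needed. You supply the missing details, and your justification of the kernel bound is valid on any smooth bounded (connected) domain without convexity: positivity and continuity of the kernel on $\bar\Omega\times\bar\Omega$ at time $1$, extended to all $\tau\ge 1$ via the semigroup property and mass conservation.
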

\begin{proof}
   The proof of \eqref{low-1} follows immediately by applying the maximum 
principle to the second equation of \eqref{1}, while, by slightly modifying 
the argument of Lemma 3.1 in \cite{FS2018}, we obtain \eqref{low-2}.
\end{proof}

A second fundamental feature of \eqref{1} is the uniform-in-time 
boundedness of the mass functionals $\int_\Omega u$ and $\int_\Omega v$.

\begin{lemma}\label{L1est}
    There exists $C>0$ such that 
    \begin{align}\label{L1est-1}
        \int_\Omega u(\cdot,t) \leq C \qquad \text{for all }t\in (0,T_{\rm max}),
    \end{align}
    and 
     \begin{align} \label{L1est-2}
        \int_\Omega v(\cdot,t) \leq C \qquad \text{for all }t\in (0,T_{\rm max}).
    \end{align}
\end{lemma}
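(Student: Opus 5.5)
Integrating the first equation of \eqref{1} over $\Omega$ and using the no-flux conditions \eqref{bdry}, both the diffusion term and the cross-diffusive term $\chi\nabla\cdot(u\nabla v/v)$ drop out. Since $u,v>0$ by Lemma \ref{local}, the term $-\int_\Omega uv$ is nonpositive. Writing $y(t):=\int_\Omega u(\cdot,t)$ and $b_1:=\sup_{t>0}\|B_1(\cdot,t)\|_{L^\infty(\Omega)}<\infty$ (from \eqref{B}), the Cauchy--Schwarz inequality $\int_\Omega u^2\geq \frac{1}{|\Omega|}\big(\int_\Omega u\big)^2$ gives
\begin{align*}
  y'(t)\leq b_1|\Omega| + r y(t) - \frac{\mu}{|\Omega|}y^2(t) \qquad \text{for all } t\in(0,T_{\rm max}).
\end{align*}
An ODE comparison then yields $y(t)\leq \max\{\int_\Omega u_0,\, y_*\}$, where $y_*$ is the positive root of $b_1|\Omega|+ry-\frac{\mu}{|\Omega|}y^2=0$. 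This proves \eqref{L1est-1}.

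For \eqref{L1est-2}, the difficulty is the production term $+uv$, which is quadratic and not controlled by $\int_\Omega u$ alone. The key observation is that the same term appears with a negative sign in the $u$-equation. So I would consider $z(t):=\int_\Omega u+\int_\Omega v$. Adding the integrated equations cancels $\pm\int_\Omega uv$ and gives
\begin{align*}
  z'(t) = \int_\Omega B_1+\int_\Omega B_2 + r\int_\Omega u-\mu\int_\Omega u^2-\int_\Omega v .
\end{align*}
Next I would absorb the growth term via $r\int_\Omega u-\mu\int_\Omega u^2\leq \int_\Omega\big(\frac{r^2}{4\mu}-\ldots\big)$; more simply, $ru-\mu u^2\leq -u+\frac{(r+1)^2}{4\mu}$ pointwise. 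This gives
\begin{align*}
  z'(t)+z(t)\leq b_1|\Omega|+\sup_{t>0}\int_\Omega B_2(\cdot,t)+\frac{(r+1)^2}{4\mu}|\Omega| .
\end{align*}
Since $B_2\in L^\infty((0,\infty);W^{1,\infty}(\Omega))$ by \eqref{B}, the right-hand side is a finite constant $K$. Gronwall's inequality then yields $z(t)\leq \max\{z(0),K\}$. Because $u\geq0$, this gives \eqref{L1est-2}, and \eqref{L1est-1} follows again from it as well.

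The only step needing care is noticing the cancellation of $uv$ in the sum; the rest is routine. I would also check that $\int_\Omega v_0<\infty$, which holds since $v_0\in W^{1,\infty}(\Omega)$. The constant $C$ depends only on $r,\mu,|\Omega|,B_1,B_2,u_0,v_0$, not on $T_{\rm max}$.
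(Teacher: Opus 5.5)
Your proposal is correct and follows essentially the same route as the paper: you add the integrated equations so that $\pm\int_\Omega uv$ cancel, bound $(r+1)u-\mu u^2\le \frac{(r+1)^2}{4\mu}$, and apply Gronwall's inequality to $\int_\Omega u+\int_\Omega v$. Your separate ODE-comparison argument for \eqref{L1est-1} is valid but redundant, since the bound on the sum already gives both estimates.
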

\begin{proof}
    Integrating the first equation and the second equation of \eqref{1} over $\Omega$, adding them together and applying Young's inequality, we obtain 
    \begin{align*}
        \frac{d}{dt} \left \{ \int_\Omega u(\cdot,t)+ \int_\Omega v(\cdot,t) \right \} +  \left \{ \int_\Omega u(\cdot,t)+ \int_\Omega v(\cdot,t) \right \} &= (r+1) \int_\Omega u - \mu \int_\Omega u^2 + \int_\Omega \left \{B_1+B_2 \right \} \notag \\
        &\leq C \qquad \text{for all } t\in (0,T_{\rm max}),
    \end{align*}
    where $C=\frac{(r+1)^2|\Omega|}{4\mu} + \left ( \left \|B_1 \right \|_{L^\infty(\Omega\times (0,\infty))}+ \left \|B_2 \right \|_{L^\infty(\Omega\times (0,\infty))}  \right )|\Omega|$. This, together with Gronwall's inequality, implies \eqref{L1est-1} and \eqref{L1est-2}, which completes the proof.
\end{proof}

We need the following auxiliary lemma to see how the second term of the functional $F$ in \eqref{F} comes into play.
\begin{lemma}\label{LW}
Let $q\geq 2$. Then
     \begin{align}\label{LW-1}
         \frac{d}{dt}\int_\Omega \frac{|\nabla v|^q}{v^{q-1}}&+\int_\Omega \frac{|\nabla v|^q}{v^{q-1}}+q \int_\Omega v^{-q+3}|\nabla v|^{q-2}|D^2 \ln v|^2 \notag \\
         &\leq q(q-2+\sqrt{n})\int_\Omega uv^{-q+2}|\nabla v|^{q-2}|D^2 v| +(q-1)^2\int_\Omega u v^{-q+1}|\nabla v|^q \notag \\
         &\quad+ \frac{q}{2}\int_{\partial \Omega} v^{-q+1}|\nabla v|^{q-2} \cdot \frac{ \partial |\nabla v|^2}{\partial \nu} \notag \\
         &\quad+q\int_\Omega v^{-q+1}|\nabla v|^{q-2} \nabla v \cdot \nabla B_2 ,
    \end{align}
    for all $t\in (0,T_{\rm max})$.
\end{lemma}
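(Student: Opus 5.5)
The plan is a direct differentiation of $\int_\Omega v^{1-q}|\nabla v|^q$, following the computation behind Lemma 3.4 in \cite{Winkler2022}. The solution is smooth and $v>0$ on $\bar\Omega\times(0,T_{\rm max})$, so differentiation under the integral is justified. We have
\begin{align*}
\frac{d}{dt}\int_\Omega \frac{|\nabla v|^q}{v^{q-1}} = \frac q2\int_\Omega v^{1-q}|\nabla v|^{q-2}\,\partial_t|\nabla v|^2 -(q-1)\int_\Omega v^{-q}|\nabla v|^q v_t .
\end{align*}
We then insert $v_t=\Delta v - v+uv+B_2$ and
\[
\partial_t|\nabla v|^2 = 2\nabla v\cdot\nabla\Delta v -2|\nabla v|^2+2\nabla v\cdot\nabla(uv)+2\nabla v\cdot\nabla B_2 .
\]
This splits the derivative into four groups: diffusion, linear decay, the $uv$ production term, and $B_2$. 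Each group is treated separately.

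\emph{Linear and $B_2$ terms.} The $-v$ contributions combine to $(-q+(q-1))\int_\Omega v^{1-q}|\nabla v|^q$, which is the term $+\int_\Omega v^{1-q}|\nabla v|^q$ on the left of \eqref{LW-1}. The $B_2$ part of $\partial_t|\nabla v|^2$ gives exactly $q\int_\Omega v^{1-q}|\nabla v|^{q-2}\nabla v\cdot\nabla B_2$. The $B_2$ part of $v_t$ gives $-(q-1)\int_\Omega v^{-q}|\nabla v|^qB_2\le 0$ because $B_2\ge0$, so it is dropped.

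\emph{Diffusion terms.} We write $\nabla v\cdot\nabla\Delta v=\frac12\Delta|\nabla v|^2-|D^2v|^2$ and integrate by parts. This produces the boundary integral $\frac q2\int_{\partial\Omega}v^{1-q}|\nabla v|^{q-2}\partial_\nu|\nabla v|^2$; no boundary term arises from $\Delta v$ since $\partial_\nu v=0$. We then express everything through
\[
D^2\ln v=\frac{D^2 v}{v}-\frac{\nabla v\otimes\nabla v}{v^2}.
\]
By the algebra of \cite{Winkler2022} (Lemma 3.4), the interior contributions equal $-q\int_\Omega v^{3-q}|\nabla v|^{q-2}|D^2\ln v|^2$ plus a term of the form $-c\int_\Omega v^{1-q}|\nabla v|^{q-4}\big|\nabla|\nabla v|^2\big|^2$ with $c\ge0$ for $q\ge2$. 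That last term is discarded. I expect this bookkeeping of cross terms $\nabla v\cdot D^2v\nabla v$, so that they assemble into a perfect square in $D^2\ln v$ with a nonpositive remainder, to be the main obstacle. I would first verify it in the case $q=2$ and then track the extra factor $|\nabla v|^{q-2}$.

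\emph{Production term.} The $uv$ contributions give
\[
q\int_\Omega v^{1-q}|\nabla v|^{q-2}\nabla v\cdot(u\nabla v+v\nabla u)-(q-1)\int_\Omega u v^{1-q}|\nabla v|^q .
\]
This equals $\int_\Omega uv^{1-q}|\nabla v|^q+q\int_\Omega v^{2-q}|\nabla v|^{q-2}\nabla v\cdot\nabla u$. In the last integral we move the derivative off $u$ (no boundary term, by the Neumann condition on $v$) and expand
\[
\nabla\cdot(v^{2-q}|\nabla v|^{q-2}\nabla v)=(2-q)v^{1-q}|\nabla v|^q+v^{2-q}|\nabla v|^{q-2}\Delta v+(q-2)v^{2-q}|\nabla v|^{q-4}\nabla v\cdot D^2v\nabla v .
\]
The first piece adds $q(q-2)\int_\Omega uv^{1-q}|\nabla v|^q$, so the total coefficient becomes $1+q(q-2)=(q-1)^2$. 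The other two pieces are estimated using $|\Delta v|\le\sqrt n|D^2v|$ and $|\nabla v\cdot D^2v\nabla v|\le|\nabla v|^2|D^2v|$, together with $u\ge0$. This yields $q(q-2+\sqrt n)\int_\Omega uv^{2-q}|\nabla v|^{q-2}|D^2v|$. Collecting all four groups gives \eqref{LW-1}.
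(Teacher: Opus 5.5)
Your route is the paper's: differentiate, insert the $v$-equation, and sort the result into decay, $B_2$, diffusion and production groups. Your handling of the decay and $B_2$ terms is correct and matches the paper, as is your treatment of the production term (moving the derivative off $u$, obtaining $1+q(q-2)=(q-1)^2$, and the bound with $q(q-2+\sqrt n)$).

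\textbf{The gap is in the diffusion group.} This is the real content of the lemma, and you defer it to a citation while asserting an outcome that is false for $q>2$. After the integrations by parts, the interior diffusion terms are
\begin{align*}
&-\tfrac{q(q-2)}{4}\int_\Omega v^{1-q}|\nabla v|^{q-4}\big|\nabla|\nabla v|^2\big|^2+q(q-1)\int_\Omega v^{-q}|\nabla v|^{q-2}\nabla v\cdot\nabla|\nabla v|^2\\
&\quad-q\int_\Omega v^{1-q}|\nabla v|^{q-2}|D^2v|^2-q(q-1)\int_\Omega v^{-q-1}|\nabla v|^{q+2}.
\end{align*}
Substituting $|D^2v|^2=v^2|D^2\ln v|^2+\frac1v\nabla v\cdot\nabla|\nabla v|^2-\frac{|\nabla v|^4}{v^2}$ does produce $-q\int_\Omega v^{3-q}|\nabla v|^{q-2}|D^2\ln v|^2$. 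Besides the $\big|\nabla|\nabla v|^2\big|^2$ term, however, it leaves the cross terms
\[
q(q-2)\int_\Omega v^{-q}|\nabla v|^{q-2}\nabla v\cdot\nabla|\nabla v|^2-q(q-2)\int_\Omega v^{-q-1}|\nabla v|^{q+2},
\]
which do not vanish in general. So the remainder is not of the form $-c\int_\Omega v^{1-q}|\nabla v|^{q-4}\big|\nabla|\nabla v|^2\big|^2$. Your planned check at $q=2$ cannot detect this, because there every remainder term carries the factor $q(q-2)=0$.

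The missing step, which the paper carries out explicitly, is that the three leftover terms complete a square:
\[
-\tfrac{q(q-2)}{4}\int_\Omega v^{1-q}|\nabla v|^{q-4}\Big|\nabla|\nabla v|^2-\tfrac{2}{v}|\nabla v|^2\nabla v\Big|^2\le 0 \qquad (q\ge 2).
\]
This is what lets you discard them. With that correction your argument yields \eqref{LW-1}. Note also that the paper invokes Lemma 3.4 of \cite{Winkler2022} only for the functional inequalities \eqref{Lw3-0}--\eqref{Lw3-1}, and does this time-derivative computation itself rather than citing it.
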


\begin{proof}
   According to the second equation in \eqref{1}, together with the identities
\(
2\nabla v \cdot \nabla \Delta v = \Delta |\nabla v|^2 - 2|D^2v|^2
\)
and
\(
\nabla |\nabla v|^2 = 2D^2 v \cdot \nabla v,
\)
several integrations by parts yield
\begin{align} \label{LW.1}
    \frac{d}{dt}\int_\Omega \frac{|\nabla v|^q}{v^{q-1}} &= q\int_\Omega v^{-q+1}|\nabla v|^{q-2} \nabla v \cdot \left \{ \nabla \Delta v- \nabla v+ \nabla (uv)+\nabla B_2  \right \} \notag \\
    &\quad-(q-1)\int_\Omega v^{-q}|\nabla v|^q \cdot \left \{ \Delta v-v+uv+B_2 \right \} \notag \\
    &= \frac{q}{2}\int_\Omega v^{-q+1}|\nabla v|^{q-2}\Delta |\nabla v|^2-q\int_\Omega v^{-q+1}|\nabla v|^q +q\int_\Omega v^{-q+1}|\nabla v|^{q-2} \nabla v \cdot \nabla (uv) \notag \\
   &\quad+q\int_\Omega v^{-q+1}|\nabla v|^{q-2} \nabla v \cdot \nabla B_2 -(q-1)\int_\Omega v^{-q}|\nabla v|^q \Delta v+(q-1)\int_\Omega v^{-q+1}|\nabla v|^q \notag \\
   &\quad-(q-1)\int_\Omega v^{-q+1}|\nabla v|^q u -(q-1)\int_\Omega v^{-q}|\nabla v|^qB_2 \notag \\
   &=-\frac{q(q-2)}{4}\int_\Omega v^{-q+1}|\nabla v|^{q-4} \left | \nabla |\nabla v|^2 \right |^2+q(q-1)\int_\Omega v^{-q}|\nabla v|^{q-2} \nabla v \cdot \nabla |\nabla v|^2 \notag \\
   &\quad-q \int_\Omega v^{-q+1}|\nabla v|^{q-2}|D^2v|^2 -q(q-1)\int_\Omega v^{-q-1}|\nabla v|^{q+2} - \int_\Omega v^{-q+1}|\nabla v|^q \notag \\
   &\quad+ \frac{q}{2}\int_{\partial \Omega} v^{-q+1}|\nabla v|^{q-2} \cdot \frac{ \partial |\nabla v|^2}{\partial \nu}+q\int_\Omega v^{-q+1}|\nabla v|^{q-2} \nabla v \cdot \nabla B_2 -(q-1)\int_\Omega v^{-q}|\nabla v|^qB_2 \notag \\
   &\quad-q(q-2)\int_\Omega uv^{-q+2}|\nabla v|^{q-4} \nabla v \cdot (D^2 v \cdot \nabla v) - q\int_\Omega u v^{-q+2}|\nabla v|^{q-2}\Delta v \notag \\
   &\quad+(q-1)^2\int_\Omega u v^{-q+1}|\nabla v|^q \qquad \text{for all }t\in (0,T_{\rm max}). 
\end{align}
    Thanks to the fact that 
    \begin{align*}
        |D^2v|^2= v^2 |D^2\ln v|^2+ \frac{1}{v} \nabla v \cdot \nabla |\nabla v|^2- \frac{|\nabla v|^4}{v^2},
    \end{align*}
    we can write 
    \begin{align} \label{LW.2}
        &-\frac{q(q-2)}{4}\int_\Omega v^{-q+1}|\nabla v|^{q-4} \left | \nabla |\nabla v|^2 \right |^2+q(q-1)\int_\Omega v^{-q}|\nabla v|^{q-2} \nabla v \cdot \nabla |\nabla v|^2 \notag \\
   &-q \int_\Omega v^{-q+1}|\nabla v|^{q-2}|D^2v|^2 -q(q-1)\int_\Omega v^{-q-1}|\nabla v|^{q+2} \notag \\
   &\quad =-q \int_\Omega v^{-q+3}|\nabla v|^{q-2}|D^2 \ln v|^2  - \frac{q(q-2)}{4}\int_\Omega v^{-q+1}|\nabla v|^{q-4} \left | \nabla |\nabla v|^2 \right |^2 \notag \\
   &\qquad+q(q-2)\int_\Omega v^{-q}|\nabla v|^{q-2}\nabla v \cdot \nabla |\nabla v|^2 -q(q-2) \int_\Omega v^{-q-1}|\nabla v|^{q+2} \notag \\
   &\quad= -q \int_\Omega v^{-q+3}|\nabla v|^{q-2}|D^2 \ln v|^2 - \frac{q(q-2)}{4} \int_\Omega v^{-q+1}|\nabla v|^{q-4} \left | \nabla |\nabla v|^2- \frac{2}{v}|\nabla v|^2 \nabla v \right |^2 \notag \\
   &\quad\leq -q \int_\Omega v^{-q+3}|\nabla v|^{q-2}|D^2 \ln v|^2,
    \end{align}
    while using the inequality $|\Delta v|\leq \sqrt{n}|D^2v|$, we obtain that 
    \begin{align} \label{LW.3}
        -q(q-2)&\int_\Omega uv^{-q+2}|\nabla v|^{q-4} \nabla v \cdot (D^2 v \cdot \nabla v) - q\int_\Omega u v^{-q+2}|\nabla v|^{q-2}\Delta v \notag \\
        &\leq q(q-2) \int_\Omega uv^{-q+2}|\nabla v|^{q-2}|D^2 v|+q\sqrt{n}\int_\Omega uv^{-q+2}|\nabla v|^{q-2}|D^2 v|.
    \end{align}
    Combining \eqref{LW.1}, \eqref{LW.2} and \eqref{LW.3}, we arrive at \eqref{LW-1}.
\end{proof}

Thanks to the above lemma, we derive a following estimate which will be used to derive the boundedness of $\int_\Omega u^pv$ in Lemma \ref{Lpuv}.
\begin{lemma} \label{L2}
For any $p>1$ then there exist $C_1>0$ and $C_2>0$ such that 
    \begin{align*}
        \frac{d}{dt} \int_\Omega \frac{|\nabla v|^{2p}}{v^{2p-1}}+ \int_\Omega \frac{|\nabla v|^{2p}}{v^{2p-1}}+ \frac{2p-1}{(2p+\sqrt{n})^2}\int_\Omega \frac{|\nabla v|^{2p+2}}{v^{2p+1}} &\leq n^{8p+2}p^{4p+2}\int_\Omega u^{p+1}v +C_1 \int_\Omega \frac{|\nabla v|^{2p-1}}{v^{2p-1}} +C_2 ,
    \end{align*}
    for all $t\in (0,T_{\rm max})$.
\end{lemma}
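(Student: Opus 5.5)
Apply Lemma \ref{LW} with $q=2p\ge 2$ and absorb every right-hand term into the dissipative quantity
\[
I(t):=\int_\Omega v^{-2p+3}|\nabla v|^{2p-2}|D^2\ln v|^2 .
\]
By \eqref{Lw3-0} with $q=2p$ we have $\int_\Omega \frac{|\nabla v|^{2p+2}}{v^{2p+1}}\le (2p+\sqrt n)^2 I$. The left side of \eqref{LW-1} contains $2pI$. So if every right-hand term is bounded by $(p-\frac12)I$ plus admissible terms, the remaining $(2p-1)\cdot\frac12\cdot 2\, I \cdot\frac{1}{(2p+\sqrt n)^2}$-type contribution gives the stated coefficient $\frac{2p-1}{(2p+\sqrt n)^2}$ in front of $\int_\Omega |\nabla v|^{2p+2}v^{-2p-1}$. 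Precisely: keep $(2p-1)I\ge \frac{2p-1}{(2p+\sqrt n)^2}\int_\Omega|\nabla v|^{2p+2}v^{-2p-1}$, and spend the remaining $I$ on absorption.

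\emph{Boundary term.} Apply \eqref{Lw3} with $q=2p$ and small $\epsilon$, for instance $\epsilon=\frac{1}{4p}$ after the factor $p$. This gives at most $\frac14 I + C\int_\Omega v$, and $\int_\Omega v$ is bounded by Lemma \ref{L1est}.

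\emph{Source term.} Since $\nabla B_2\in L^\infty$, this term is bounded by $2p\|\nabla B_2\|_\infty\int_\Omega \frac{|\nabla v|^{2p-1}}{v^{2p-1}}$. That is exactly the $C_1$ term in the statement.

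\emph{Cross terms in $u$.} Write $|D^2v|\le v^{-1/2}\cdot v^{1/2}|D^2 v|$ and use Young's inequality:
\[
2p(2p-2+\sqrt n)\int_\Omega u v^{-2p+2}|\nabla v|^{2p-2}|D^2v|\le \eta\int_\Omega v^{-2p+1}|\nabla v|^{2p-2}|D^2v|^2+C_\eta\int_\Omega u^2 v^{-2p+3}|\nabla v|^{2p-2}.
\]
By \eqref{Lw3-1} the first integral is at most $(2p+\sqrt n+1)^2 I$, so a suitable $\eta$ makes it $\le \frac14 I$. For the second integral, apply Young with exponents $\frac{p+1}{2}$ and $\frac{p+1}{p-1}$, splitting
\[
u^2v^{-2p+3}|\nabla v|^{2p-2}=\bigl(u^2 v^{\frac{2}{p+1}}\bigr)\cdot\bigl(|\nabla v|^{2p-2}v^{-2p+3-\frac{2}{p+1}}\bigr).
\]
Raising the second factor to the power $\frac{p+1}{p-1}$ gives $|\nabla v|^{2p+2}v^{-(2p+1)}$; the exponent check is $(-2p+3-\frac{2}{p+1})\frac{p+1}{p-1}=-(2p+1)$. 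This yields $\eta'\int_\Omega|\nabla v|^{2p+2}v^{-2p-1}+C\int_\Omega u^{p+1}v$. The term $(2p-1)^2\int_\Omega u v^{-2p+1}|\nabla v|^{2p}$ is handled the same way, with exponents $p+1$ and $\frac{p+1}{p}$, giving $\eta'\int_\Omega|\nabla v|^{2p+2}v^{-2p-1}+C\int_\Omega u^{p+1}v$. Each $\eta'$-term is converted back to $I$ via \eqref{Lw3-0} and taken $\le\frac14 I$. For $p$ close to $1$ the Young step degenerates, so there one uses $u^2v^{-2p+3}|\nabla v|^{2p-2}\le u^2 v$ when $p=1$, and continuity otherwise.

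\emph{Main obstacle.} The hard step is the bookkeeping of constants: the accumulated Young constants, with $\eta\sim (p(2p+\sqrt n)^2)^{-1}$, must come out below the explicit $n^{8p+2}p^{4p+2}$. I would track them crudely, bounding $2p+\sqrt n+1\le 3np$ and $(2p-1)^2\le 4p^2$, and raising to the Young powers $\frac{p+1}{2}$ and $p+1$. This produces factors of order $(np)^{c(p+1)}$, which I expect to fit under $n^{8p+2}p^{4p+2}$ using $n\ge2$. Summing the three $\frac14 I$ pieces (boundary and two cross terms) uses at most $I$, which completes the estimate.
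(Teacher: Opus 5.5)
Your proposal is essentially the paper's proof. The paper likewise starts from Lemma \ref{LW} with $q=2p$ (cited there as Winkler's Lemma 3.3). It absorbs the boundary term by \eqref{Lw3}, and the two $u$-terms by Young's inequality together with \eqref{Lw3-0} and \eqref{Lw3-1}, using exponents $p+1,\frac{p+1}{p}$, resp.\ $2$ followed by $\frac{p+1}{2},\frac{p+1}{p-1}$. It spends exactly one copy of $\int_\Omega v^{-2p+3}|\nabla v|^{2p-2}|D^2\ln v|^2$ through the single choice $\epsilon=\left(p+2(2p+\sqrt n)^2+(2p+\sqrt n+1)^2\right)^{-1}$, and then simply asserts that the accumulated coefficient is at most $n^{8p+2}p^{4p+2}$.

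Two small remarks on your version. First, nothing degenerates as $p\downarrow 1$: the Young constant stays bounded, so no continuity argument is needed. Second, in the final arithmetic you must keep the actual factors $2p-2+\sqrt n$, $2p+\sqrt n$, $2p+\sqrt n+1$. Replacing them all by $3np$ already overshoots $2^{8p+2}p^{4p+2}$ when $n=2$. The exact expressions from your quarter-splitting do fit, and it suffices to check the case $n=2$.
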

\begin{proof}
    Making use of Lemma 3.3 in \cite{Winkler2022}, we obtain
    \begin{align}  \label{L2.1}
    \frac{d}{dt}\int_\Omega \frac{|\nabla v|^{2p}}{v^{2p-1}}+\int_\Omega \frac{|\nabla v|^{2p}}{v^{2p-1}}  &\leq p \int_{\partial \Omega} \frac{|\nabla v|^{2p-2}}{v^{2p-1}} \frac{\partial |\nabla v|^2}{\partial \nu} - 2p\int_\Omega \frac{|\nabla v|^{2p-2}}{v^{2p-3}}  |D^2 \ln v|^2  +(2p-1)^2\int_\Omega u \frac{|\nabla v|^{2p}}{v^{2p-1}} \notag \\
    &\quad + 2p(2p-2+\sqrt{n}) \int_\Omega   \frac{u|\nabla v|^{2p-2}}{v^{2p-2}} |D^2 v| +c_1\int_\Omega \frac{|\nabla v|^{2p-1}}{v^{2p-1}}
\end{align}
for all $t\in (0,T_{\rm max})$, where $c_1= 2p \left \| \nabla B_2 \right \|_{L^\infty(\Omega \times (0, \infty))} $. Let
    \begin{align*}
        \epsilon= \frac{1}{p+2(2p+\sqrt{n})^2+(2p+\sqrt{n}+1)^2}.
    \end{align*}
     By applying Young's inequality and using \eqref{Lw3-0}, we deduce that   
    \begin{align} \label{L2.2}
       (2p-1)^2 \int_\Omega u\frac{|\nabla v|^{2p}}{v^{2p-1}} &\leq  \epsilon \int_\Omega \frac{|\nabla v|^{2p+2}}{v^{2p+1}} +\frac{(2p-1)^{2p+2}}{\epsilon^p} \int_\Omega u^{p+1}v \notag \\
        &\leq \epsilon (2p+\sqrt{n})^2 \int_\Omega \frac{|\nabla v|^{2p-2}}{v^{2p-3}}  |D^2 \ln v|^2 +\frac{(2p-1)^{2p+2}}{\epsilon^p} \int_\Omega u^{p+1}v. 
    \end{align}
     In view of \eqref{Lw3}, we can find $c_2=c_2(\epsilon)>0$ such that 
    \begin{align} \label{L2.3}
        p \int_{\partial \Omega} \frac{|\nabla v|^{2p-2}}{v^{2p-1}} \frac{\partial |\nabla v|^2}{\partial \nu} \leq  \epsilon p \int_\Omega \frac{|\nabla v|^{2p-2}}{v^{2p-3}}  |D^2 \ln v|^2 +c_2 \int_\Omega v.
    \end{align}
    Applying Young's inequality and employing \eqref{Lw3-0} and \eqref{Lw3-1}, we infer that 
    \begin{align} \label{L2.4}
         2p(2p-2+\sqrt{n}) \int_\Omega   \frac{u|\nabla v|^{2p-2}}{v^{2p-2}} |D^2 v| &\leq \epsilon  \int_\Omega \frac{|\nabla v|^{2p-2}}{v^{2p-3}}  |D^2  v|^2 + \frac{p^2(2p-2+\sqrt{n})^2}{\epsilon} \int_\Omega u^2 \frac{|\nabla v|^{2p-2}}{v^{2p-3}} \notag \\
         &\leq \epsilon  \int_\Omega \frac{|\nabla v|^{2p-2}}{v^{2p-3}}  |D^2  v|^2 + \epsilon \int_\Omega \frac{|\nabla v|^{2p+2}}{v^{2p+1}}+ \frac{ \left (p(2p-2+\sqrt{n}) \right)^{p+1}}{\epsilon^p} \int_\Omega u^{p+1}v \notag \\
         &\leq \epsilon \left ( (2p+\sqrt{n}+1)^2+ (2p+\sqrt{n})^2 \right )\int_\Omega \frac{|\nabla v|^{2p-2}}{v^{2p-3}}  |D^2 \ln v|^2  \notag \\
         &\quad+ \frac{ \left (p(2p-2+\sqrt{n}) \right)^{p+1}}{\epsilon^p} \int_\Omega u^{p+1}v.
    \end{align}
    Now, collecting from \eqref{L2.1} to \eqref{L2.4} and applying \eqref{Lw3-0} and \eqref{L1est-2}, we infer that 
    \begin{align*}
         \frac{d}{dt}\int_\Omega \frac{|\nabla v|^{2p}}{v^{2p-1}}+\int_\Omega \frac{|\nabla v|^{2p}}{v^{2p-1}} &\leq -\left (2p - \epsilon \left( p +2(2p+\sqrt{n})^2 +(2p+\sqrt{n}+1)^2\right) \right )\int_\Omega \frac{|\nabla v|^{2p-2}}{v^{2p-3}}  |D^2 \ln v|^2   \notag \\
         &\quad+ \frac{ \left (p(2p-2+\sqrt{n}) \right)^{p+1} +(2p-1)^{2p+2}}{\epsilon^p} \int_\Omega u^{p+1}v + c_1\int_\Omega \frac{|\nabla v|^{2p-1}}{v^{2p-1}}+c_2\int_\Omega v \notag \\
         &= -(2p-1) \int_\Omega \frac{|\nabla v|^{2p-2}}{v^{2p-3}}  |D^2 \ln v|^2+ \frac{ \left (p(2p-2+\sqrt{n}) \right)^{p+1} +(2p-1)^{2p+2}}{\epsilon^p} \int_\Omega u^{p+1}v \notag \\
         &\quad + c_1\int_\Omega \frac{|\nabla v|^{2p-1}}{v^{2p-1}}+c_2\int_\Omega v \notag \\
         &\leq -\frac{2p-1}{(2p+\sqrt{n})^2} \int_\Omega \frac{|\nabla v|^{2p+2}}{v^{2p+1}}  + c_1\int_\Omega \frac{|\nabla v|^{2p-1}}{v^{2p-1}}+c_2\int_\Omega v \notag \\ 
         &\quad+ \left [\left (p(2p-2+\sqrt{n}) \right)^{p+1} +(2p-1)^{2p+2} \right ]\cdot \left ( p+ 2(2p+\sqrt{n})^2+(2p+\sqrt{n}+1)^2 \right )^p\int_\Omega u^{p+1}v \notag \\
         &\leq -\frac{2p-1}{(2p+\sqrt{n})^2} \int_\Omega \frac{|\nabla v|^{2p+2}}{v^{2p+1}}  + c_1\int_\Omega \frac{|\nabla v|^{2p-1}}{v^{2p-1}}+ n^{8p+2}p^{4p+2}\int_\Omega u^{p+1}v+c_3
    \end{align*}
 for all $t\in (0,T_{\rm max})$,   where $c_3=c_2 \sup_{t\in (0,T_{\rm max})}\int_\Omega v(\cdot,t)$ and the last inequality holds due to
 \begin{align*}
     \left [\left (p(2p-2+\sqrt{n}) \right)^{p+1} +(2p-1)^{2p+2} \right ]\cdot \left ( p+ 2(2p+\sqrt{n})^2+(2p+\sqrt{n}+1)^2 \right )^p \leq n^{8p+2}p^{4p+2}.
 \end{align*}
 The proof is now complete.
\end{proof}

Before establishing the key estimate, namely the uniform-in-time boundedness of $\int_\Omega u^pv$, we will need the following identity, which explains the role of the first term of \eqref{F}.
\begin{lemma}\label{L1}
    For any $p>1$, the following holds
 \begin{align} \label{L1-1}
        \frac{d}{dt}\int_\Omega u^p v&=-p(p-1)\int_\Omega u^{p-2}v|\nabla u|^2+p(\chi(p-1)-2)\int_\Omega u^{p-1}\nabla u \cdot \nabla v +\chi p \int_\Omega u^p \frac{|\nabla v|^2}{v} \notag \\
       &\quad +p\int_\Omega u^{p-1}v B_1+ \int_\Omega u^p B_2+(rp-1) \int_\Omega u^pv - (\mu p-1) \int_\Omega u^{p+1}v \qquad \text{for all }t\in (0,T_{\rm max}).
   \end{align}
\end{lemma}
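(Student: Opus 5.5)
Since \eqref{L1-1} is an exact identity, the plan is a direct computation. By the product rule,
\[
\frac{d}{dt}\int_\Omega u^p v=p\int_\Omega u^{p-1}v\,u_t+\int_\Omega u^p v_t .
\]
We substitute both equations of \eqref{1} and integrate by parts, using the Neumann conditions \eqref{bdry} to drop every boundary integral. The regularity from Lemma \ref{local} ($u,v\in C^{2,1}$ and $u,v>0$ on $\bar\Omega\times(0,T_{\rm max})$) makes all these manipulations legitimate, including for noninteger $p>1$.

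\emph{Contribution of $u_t$.} For the diffusion term,
\[
p\int_\Omega u^{p-1}v\Delta u=-p(p-1)\int_\Omega u^{p-2}v|\nabla u|^2-p\int_\Omega u^{p-1}\nabla u\cdot\nabla v .
\]
For the cross-diffusion term,
\[
-\chi p\int_\Omega u^{p-1}v\,\nabla\cdot\Bigl(u\frac{\nabla v}{v}\Bigr)=\chi p\int_\Omega \nabla(u^{p-1}v)\cdot u\frac{\nabla v}{v},
\]
and expanding $\nabla(u^{p-1}v)=(p-1)u^{p-2}v\nabla u+u^{p-1}\nabla v$ gives $\chi p(p-1)\int_\Omega u^{p-1}\nabla u\cdot\nabla v+\chi p\int_\Omega u^p\frac{|\nabla v|^2}{v}$. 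The singular factor $1/v$ cancels in the first piece and survives only in the second. The reaction terms contribute
\[
-p\int_\Omega u^pv^2+p\int_\Omega u^{p-1}vB_1+rp\int_\Omega u^pv-\mu p\int_\Omega u^{p+1}v .
\]

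\emph{Contribution of $v_t$.} We have
\[
\int_\Omega u^p\Delta v=-p\int_\Omega u^{p-1}\nabla u\cdot\nabla v .
\]
The remaining terms give $-\int_\Omega u^pv+\int_\Omega u^{p+1}v+\int_\Omega u^pB_2$.

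\emph{Collecting terms.} The three $\nabla u\cdot\nabla v$ contributions sum to $p(\chi(p-1)-2)\int_\Omega u^{p-1}\nabla u\cdot\nabla v$. The terms in $u^pv$ combine to $(rp-1)\int_\Omega u^pv$, and those in $u^{p+1}v$ combine to $-(\mu p-1)\int_\Omega u^{p+1}v$. This produces every term of \eqref{L1-1}.

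There is one leftover: the predation term $-uv$ in the $u$-equation contributes $-p\int_\Omega u^pv^2$, which does not appear in \eqref{L1-1}. This is the point I would check most carefully. It is nonpositive, so if it is genuinely absent from the stated identity, \eqref{L1-1} should be read as ``$\le$'', or that term kept. Either way nothing later is affected, since Lemma \ref{Lpuv} only needs an upper bound for $\frac{d}{dt}\int_\Omega u^pv$. Apart from this, the only real difficulty is bookkeeping the coefficient of the $\nabla u\cdot\nabla v$ term correctly.
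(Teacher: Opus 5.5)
Your computation is correct and follows the paper's proof exactly: write $\frac{d}{dt}\int_\Omega u^pv=p\int_\Omega u^{p-1}v\,u_t+\int_\Omega u^pv_t$, integrate by parts using \eqref{bdry}, and combine the three $\nabla u\cdot\nabla v$ contributions into the coefficient $p(\chi(p-1)-2)$. The discrepancy you flagged is genuine: the paper's own intermediate formula \eqref{L1.2} contains $-p\int_\Omega u^pv^2$, which is silently dropped in \eqref{L1-1}, so the lemma holds only with that term restored or as an inequality ``$\le$'', and, as you note, the inequality is all that Lemma \ref{Lpuv} needs.
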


\begin{proof}
    Direct calculations shows that 
    \begin{align} \label{L1.1}
        \frac{d}{dt}\int_\Omega u^p v &= p \int_\Omega u^{p-1}v u_t+ \int_\Omega u^pv_t \notag \\
        &:=I+J.
    \end{align}
   Using the first equation of \eqref{1}  and integrating by parts, we obtain 
   \begin{align} \label{L1.2}
       I&=p \int_\Omega u^{p-1}v \left ( \Delta u - \chi \nabla \cdot \left ( u \frac{\nabla v}{v} \right ) -uv+B_1+ru - \mu u^2  \right ) \notag \\
       &= -p(p-1)\int_\Omega u^{p-2}v|\nabla u|^2+p(\chi(p-1)-1)\int_\Omega u^{p-1}\nabla u \cdot \nabla v +\chi p \int_\Omega u^p \frac{|\nabla v|^2}{v} \notag \\
       &\quad -p\int_\Omega u^p v^2+p\int_\Omega u^{p-1}v B_1+rp \int_\Omega u^pv - \mu p \int_\Omega u^{p+1}v \qquad \text{for all }t\in (0,T_{\rm max}).
   \end{align}
   Now, using the second equation of \eqref{1}  and integrating by parts, we have
   \begin{align} \label{L1.3}
       J&= \int_\Omega u^p \left ( \Delta v -v+uv+B_2 \right ) \notag \\
       &=-p \int_\Omega u^{p-1}\nabla u \cdot \nabla v - \int_\Omega u^pv+ \int_\Omega u^{p+1}v + \int_\Omega u^p B_2 \qquad \text{for all }t\in (0,T_{\rm max}).
   \end{align}
   Combining \eqref{L1.1}, \eqref{L1.2} and \eqref{L1.3}, we arrive at \eqref{L1-1}. The proof is now complete.
\end{proof}

We are now ready to establish the most important estimate, which allows us to derive $L^p$ boundedness for $u$ in Lemma \ref{Lp}.

\begin{lemma} \label{Lpuv}
    Let $p>1$ and suppose that $\mu>\mu_0(p)$ where $\mu_0(p)$ is defined in \eqref{mu}. If $T_{\rm max}<\infty$, then there exists $C=C(p,T_{\rm max})>0$ such that
    \begin{align} \label{Lpuv-1}
        \int_\Omega u^p(\cdot,t)v(\cdot,t) \leq C \qquad \text{for all } t\in (0,T_{\rm max}),
    \end{align}
    and
    \begin{align} \label{Lpuv-2}
        \int_\Omega \frac{|\nabla v(\cdot,t)|^{2p}}{v^{2p-1}(\cdot,t)} \leq C \qquad \text{for all } t\in (0,T_{\rm max}).
    \end{align}
    Moreover, if condition \eqref{B2} holds, then \eqref{Lpuv-1} and \eqref{Lpuv-2} also hold for $T_{\rm max}\in(0,\infty]$, with $C$ independent of $T_{\rm max}$.
\end{lemma}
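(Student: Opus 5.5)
We will derive a differential inequality for the functional $F(t)=\int_\Omega u^pv+\int_\Omega \frac{|\nabla v|^{2p}}{v^{2p-1}}$ from \eqref{F}. To do so we add the identity of Lemma \ref{L1} to the inequality of Lemma \ref{L2}. The choice $\mu>\mu_0(p)$ is designed so that the damping term $-(\mu p-1)\int_\Omega u^{p+1}v$ absorbs every other superlinear contribution.

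\textbf{Step 1: the mixed gradient term.} In \eqref{L1-1} we handle $p(\chi(p-1)-2)\int_\Omega u^{p-1}\nabla u\cdot\nabla v$ by Young's inequality against the dissipation $-p(p-1)\int_\Omega u^{p-2}v|\nabla u|^2$, writing
\begin{align*}
u^{p-1}|\nabla u||\nabla v| = \bigl(u^{\frac{p-2}{2}}v^{\frac12}|\nabla u|\bigr)\cdot \bigl(u^{\frac p2}v^{-\frac12}|\nabla v|\bigr).
\end{align*}
This yields
\begin{align*}
p|\chi(p-1)-2|\int_\Omega u^{p-1}|\nabla u||\nabla v| \le p(p-1)\int_\Omega u^{p-2}v|\nabla u|^2 + \frac{p(\chi(p-1)-2)^2}{4(p-1)}\int_\Omega u^p\frac{|\nabla v|^2}{v}.
\end{align*}
All cross-diffusive contributions are thereby reduced to $K\int_\Omega u^p\frac{|\nabla v|^2}{v}$ with $K:=\chi p+\frac{p(\chi(p-1)-2)^2}{4(p-1)}$. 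This is exactly the bracket appearing in \eqref{mu}.

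\textbf{Step 2: the term $K\int_\Omega u^p|\nabla v|^2/v$.} We split it by Young's inequality with exponents $\frac{p+1}{p}$ and $p+1$, writing $u^p|\nabla v|^2v^{-1}=(u^{p}v^{\frac{p}{p+1}})\cdot(|\nabla v|^2v^{-1-\frac{p}{p+1}})$. Since $(|\nabla v|^2v^{-\frac{2p+1}{p+1}})^{p+1}=|\nabla v|^{2p+2}v^{-(2p+1)}$, we obtain
\begin{align*}
K\int_\Omega u^p\frac{|\nabla v|^2}{v}\le \frac{2p-1}{2(2p+\sqrt n)^2}\int_\Omega\frac{|\nabla v|^{2p+2}}{v^{2p+1}} + c(p)\,K^{\frac{p+1}{p}}\Bigl(\frac{(2p+\sqrt n)^2}{2p-1}\Bigr)^{\frac1p}\int_\Omega u^{p+1}v.
\end{align*}
The first term is absorbed by the dissipation in Lemma \ref{L2}. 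Up to the harmless numerical constant $c(p)$, the second coefficient matches the second summand of $p\mu_0(p)$.

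\textbf{Step 3: lower-order terms.} We show that everything remaining is controlled by $\eta\int_\Omega u^{p+1}v+\eta\int_\Omega\frac{|\nabla v|^{2p+2}}{v^{2p+1}}+C$ for small $\eta>0$.
\begin{itemize}
\item The term $rp\int_\Omega u^pv$ and $p\int_\Omega u^{p-1}vB_1$ are bounded by $\eta\int_\Omega u^{p+1}v+C\int_\Omega v$, using $B_1\in L^\infty$ and \eqref{L1est-2}.
\item The term $\int_\Omega u^pB_2$ is the delicate one, since it carries no weight $v$. On a finite interval we use \eqref{low-1}, $v\ge e^{-T_{\rm max}}\inf v_0$, which gives $u^p\le c\,u^pv$ and hence a bound by $\eta\int_\Omega u^{p+1}v+C$. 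Under \eqref{B2} the same argument works with \eqref{low-2}, and $C$ is then time-independent. This is the only point at which $T_{\rm max}<\infty$ or \eqref{B2} enters.
\item The term $C_1\int_\Omega |\nabla v|^{2p-1}v^{1-2p}$ is split by Young's inequality into $\eta\int_\Omega\frac{|\nabla v|^{2p+2}}{v^{2p+1}}+C\int_\Omega v^{-(2p-2)/3}$. The last integral is bounded again by the lower bound on $v$.
\end{itemize}
The term $\int_\Omega u^{p+1}v$ produced by Lemma \ref{L2}, with coefficient $n^{8p+2}p^{4p+2}$, corresponds to the last summand of $p\mu_0(p)$. The constant $3$ in $\frac3p$ supplies the slack: $-(\mu p-1)$ together with the various $\eta$'s still leaves a strictly negative multiple of $\int_\Omega u^{p+1}v$.

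\textbf{Step 4: closing the inequality.} Collecting Steps 1--3 gives
\begin{align*}
F'(t)+\int_\Omega\frac{|\nabla v|^{2p}}{v^{2p-1}}+c\int_\Omega u^{p+1}v\le C.
\end{align*}
Using $\int_\Omega u^pv\le \int_\Omega u^{p+1}v+\int_\Omega v$ together with \eqref{L1est-2}, this becomes $F'+\min\{1,c\}F\le C'$, and an ODE comparison yields \eqref{Lpuv-1}--\eqref{Lpuv-2}.

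\textbf{Main obstacle.} The hard part is the bookkeeping of the Young constants so that the damping really dominates under $\mu>\mu_0(p)$. In particular, the split in Step 2 must consume at most half of the Lemma \ref{L2} dissipation, and the remaining $\int_\Omega u^{p+1}v$ coefficients must add to less than $\mu p-1$. If a stray numerical factor appears, the remedy is to choose the Step 2 Young parameter more carefully, absorbing it into the slack left by the $3/p$ term.
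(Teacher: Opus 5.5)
Your proposal is correct and follows the paper's proof essentially step for step: the same functional $F$, the same Young splitting of the cross term into $K\int_\Omega u^p\frac{|\nabla v|^2}{v}$, the lower bound on $v$ for $\int_\Omega u^pB_2$ and for the $|\nabla v|^{2p-1}$ term, and then Gronwall. The differences are only bookkeeping: the paper spends the full dissipation $\frac{2p-1}{(2p+\sqrt n)^2}\int_\Omega\frac{|\nabla v|^{2p+2}}{v^{2p+1}}$ in your Step 2, absorbs $C_1\int_\Omega\frac{|\nabla v|^{2p-1}}{v^{2p-1}}$ into $\frac12\int_\Omega\frac{|\nabla v|^{2p}}{v^{2p-1}}$, and uses coefficient $1$ rather than $\eta$ in the two lower-order absorptions, which together with the $+1$ in $-(\mu p-1)$ is exactly where $\frac3p$ comes from.

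One caution concerns your constant $c(p)$: it is harmless only because the sharp Young constant gives $c(p)=\frac{p}{p+1}\bigl(\frac{2}{p+1}\bigr)^{1/p}<1$. It is not harmless because of the slack from $\frac3p$. That slack is additive and cannot absorb an excess $(c(p)-1)c_4$ that grows with $\chi$. For example, the cruder form $\epsilon^{-1/p}$ with half the dissipation would give $2^{1/p}c_4$, and the argument would not close for $\mu$ just above $\mu_0(p)$.
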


\begin{proof}
Employing Lemma \ref{L1} and applying Young's inequality, we obtain 
 \begin{align} \label{Lpuv.1}
        \frac{d}{dt}\int_\Omega u^p v&=-p(p-1)\int_\Omega u^{p-2}v|\nabla u|^2+p(\chi(p-1)-2)\int_\Omega u^{p-1}\nabla u \cdot \nabla v +\chi p \int_\Omega u^p \frac{|\nabla v|^2}{v} \notag \\
       &\quad +p\int_\Omega u^{p-1}v B_1+ \int_\Omega u^p B_2+(rp-1) \int_\Omega u^pv - (\mu p-1) \int_\Omega u^{p+1}v  \notag \\
       &\leq c_1 \int_\Omega u^p \frac{|\nabla v|^2}{v}+ c_2 \int_\Omega u^{p-1}v+c_3\int_\Omega u^p \notag \\
       &\quad+(rp-1) \int_\Omega u^pv - (\mu p-1) \int_\Omega u^{p+1}v  
       \qquad \text{for all }t\in (0,T_{\rm max}),
   \end{align}
   where $c_1= \chi p + \frac{p(\chi(p-1)-2)^2}{4(p-1)}$, $c_2= p \left \|B_1 \right \|_{L^\infty(\Omega \times (0, \infty))}$ and $c_3=  \left \|B_2 \right \|_{L^\infty(\Omega \times (0, \infty))}$. Two further application Young's inequality yields 
    \begin{align} \label{Lpuv.2}
      c_1 \int_\Omega u^p \frac{|\nabla v|^2}{v} \leq  \frac{2p-1}{(2p+\sqrt{n})^2}\int_\Omega \frac{|\nabla v|^{2p+2}}{v^{2p+1}}+c_4 \int_\Omega u^{p+1}v, 
  \end{align}
  where $c_4= c_1^{\frac{p+1}{p}} \cdot \left ( \frac{(2p+\sqrt{n})^2}{2p-1} \right )^\frac{1}{p}$ and 
  \begin{align}\label{Lpuv.3}
      c_2 \int_\Omega u^{p-1}v+ rp \int_\Omega u^pv &\leq \int_\Omega u^{p+1}v + c_5 \int_\Omega v\notag \\
      &\leq \int_\Omega u^{p+1}v  +c_6,
  \end{align}
  where $c_5>0$ and $c_6=c_5\sup_{t\in (0,T_{\rm max})} \int_\Omega v(\cdot,t) $ is finite due to \eqref{L1est-2}. In view of Lemma \ref{L2}, we can find $c_7>0$ and $c_8>0$ such that
   \begin{align}\label{Lpuv.4}
        \frac{d}{dt} \int_\Omega \frac{|\nabla v|^{2p}}{v^{2p-1}}+ \int_\Omega \frac{|\nabla v|^{2p}}{v^{2p-1}}+ \frac{2p-1}{(2p+\sqrt{n})^2}\int_\Omega \frac{|\nabla v|^{2p+2}}{v^{2p+1}} &\leq n^{8p+2}p^{4p+2}\int_\Omega u^{p+1}v +c_7 \int_\Omega \frac{|\nabla v|^{2p-1}}{v^{2p-1}} +c_8 ,
    \end{align}
    for all $t\in (0,T_{\rm max})$. We apply Lemma \ref{low} to deduce that
    \begin{align*}
       \min_{x\in \bar{\Omega}} v(x,t) \geq \delta \qquad \text{for all }t\in (0,T_{\rm max}),
    \end{align*}
    where $\delta =e^{-T_{\rm max}} \inf_{x\in \Omega }v_0(x)>0$ if $T_{\rm max}< \infty$ and $\delta>0$ is independent of $T_{\rm max}$ if \eqref{B2} holds. Using this, together with Young's inequality, we obtain 
    \begin{align}\label{Lpuv.6}
        c_3\int_\Omega u^p &\leq \int_\Omega u^{p+1}v + c_2^{p+1}\int_\Omega v^{-p} \notag \\
        &\leq \int_\Omega u^{p+1}v +c_9,
    \end{align}
    where $c_9=c_3^{p+1}\delta^{-p}|\Omega|$, and
    \begin{align}\label{Lpuv.7}
        c_7 \int_\Omega \frac{|\nabla v|^{2p-1}}{v^{2p-1}} &\leq \frac{1}{2} \int_\Omega  \frac{|\nabla v|^{2p}}{v^{2p-1}} + 2^{2p-1}c_7^{2p}\int_\Omega v^{1-2p} \notag \\
        &\leq \frac{1}{2} \int_\Omega  \frac{|\nabla v|^{2p}}{v^{2p-1}}+c_{10},
    \end{align}
    where $c_{10} = 2^{2p-1}c_7^{2p} \delta^{1-2p}|\Omega|$. Setting 
    \begin{align*}
        F(t)= \int_\Omega u^pv + \int_\Omega \frac{|\nabla v|^{2p}}{v^{2p-1}} \qquad \text{for all }t\in (0,T_{\rm max}),
    \end{align*}
  and collecting from \eqref{Lpuv.1} to \eqref{Lpuv.7} and noting from \eqref{mu} that 
    \begin{align*}
      \mu_0(p)= \frac{3+c_4}{p} +n^{8p+2}p^{4p+1},
  \end{align*}
  we arrive at 
  \begin{align*}
      F'(t)+\frac{1}{2}F(t) &\leq -\left (\mu p  -3-c_4-n^{8p+2}p^{4p+2} \right )\int_\Omega u^{p+1}v+c_{11} \notag \\
      &= -p(\mu-\mu_0(p))\int_\Omega u^{p+1}v+c_{11} \leq c_{11} \qquad \text{for all }t\in (0,T_{\rm max}),
  \end{align*}
  where $c_{11}= c_6+c_9+c_{10}$ and the last inequality holds because $\mu>\mu_0(p)$. This, together with Gronwall's inequality implies \eqref{Lpuv-1} and \eqref{Lpuv-2}, which completes the proof.
\end{proof}
\section{$L^p$ boundedness and proof of the main result} \label{S4}

In this section, we shall establish the $L^p $ boundedness for $u$ and provide a proof for our main result.  As a consequence of Lemma \ref{Lpuv}, we can derive $L^p$ boundedness for $u$ and $W^{1, \infty}$ boundedness for $v$ in the following lemma. 
\begin{lemma} \label{Lp}
    Assume that $\mu> \mu_*$ where $\mu_*$ is defined in \eqref{mu*}. If $T_{\rm max}< \infty $ then there exist $p> n$ and $C=C(p,T_{\rm max})>0$ such that 
    \begin{align} \label{Lp-1}
        \int_\Omega u^{p}(\cdot,t) \leq C \qquad \text{for all }t\in (0,T_{\rm max}),
    \end{align}
    and 
    \begin{align} \label{Lp-2}
        \left \| v (\cdot,t) \right \|_{W^{1, \infty}(\Omega)}  \leq C \qquad \text{for all }t\in (0,T_{\rm max}).
    \end{align}
    Furthermore, if the condition \eqref{B2} holds then \eqref{Lp-1} and \eqref{Lp-2} also hold for $T_{\rm max} \in (0, \infty]$, with $C$ independent of $T_{\rm max}$.
\end{lemma}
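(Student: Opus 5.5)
Note first that $\mu_*=\mu_0(n)$, and that $\mu_0$ is continuous in $p$ near $p=n$. Since $\mu>\mu_*$ is strict, there is some $p>n$, close to $n$, with $\mu>\mu_0(p)$. With this $p$ fixed, Lemma \ref{Lpuv} gives a uniform bound on $\int_\Omega u^p v$. If $T_{\rm max}<\infty$, the bound depends on $T_{\rm max}$; under \eqref{B2} it is uniform.

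Next we remove the weight $v$. By Lemma \ref{low}, $v\ge\delta>0$ on $\Omega\times(0,T_{\rm max})$, where $\delta=e^{-T_{\rm max}}\inf v_0$ in the first case and $\delta$ is time-independent under \eqref{B2}. Hence
\[
\int_\Omega u^p\le \delta^{-1}\int_\Omega u^pv\le C,
\]
which is \eqref{Lp-1}.

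For \eqref{Lp-2} we apply Lemma \ref{C52.Para-Reg} to the second equation, written as $v_t=\Delta v-v+f$ with $f=uv+B_2$. The main obstacle is that $f$ contains the product $uv$, so we need $f\in L^\infty((0,T_{\rm max});L^{q}(\Omega))$ for some $q>n$. We also need $v_0\in W^{1,\infty}(\Omega)$, which is assumed.

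We control $uv$ in two steps. First, bound $v$ in $L^\infty$ by a bootstrap. Since $uv\le \tfrac12 u^2+\tfrac12 v^2$ is not helpful directly, we instead use H\"older: $\|uv\|_{L^{s}}\le\|u\|_{L^p}\|v\|_{L^{r}}$ with $\tfrac1s=\tfrac1p+\tfrac1r$. Applying Lemma \ref{C52.Para-Reg} repeatedly, starting from $v\in L^\infty(L^1)$ (Lemma \ref{L1est}), raises the integrability of $v$ at each step. Because $p>n$, each step gains a fixed amount in the Sobolev exponent, so finitely many steps give $v\in L^\infty(W^{1,q})$ with $q>n$, and therefore $v\in L^\infty(L^\infty)$.

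Alternatively, one can exploit \eqref{Lpuv-2}. Since $|\nabla v|^{2p}/v^{2p-1}=|\nabla v|^{2p}v^{1-2p}$, we get $\int|\nabla v^{1/(2p)}|^{2p}\le C$, which controls $v^{1/(2p)}$ in $W^{1,2p}\hookrightarrow L^\infty$, using $2p>n$ and the $L^1$ bound on $v$. This gives $\|v\|_{L^\infty}\le C$ immediately and is cleaner, so I would try it first.

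Once $v$ is bounded, $f=uv+B_2$ satisfies $\|f\|_{L^p}\le \|v\|_{L^\infty}\|u\|_{L^p}+\|B_2\|_{L^\infty}$, which is bounded. Since $p>n$, Lemma \ref{C52.Para-Reg} with $q=\infty$ yields $v\in L^\infty((0,T_{\rm max});W^{1,\infty}(\Omega))$, which is \eqref{Lp-2}. All constants are independent of $T_{\rm max}$ under \eqref{B2}, because Lemma \ref{C52.Para-Reg} allows $T=\infty$.
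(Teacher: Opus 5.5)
Your proof is correct, and with the route you say you would try first it is essentially identical to the paper's argument: read \eqref{Lpuv-2} as a bound on $\|\nabla v^{1/(2p)}\|_{L^{2p}}$, use $W^{1,2p}\hookrightarrow L^\infty$ together with the $L^1$ bound on $v$, and then apply Lemma \ref{C52.Para-Reg} to $f=uv+B_2\in L^\infty((0,T_{\rm max});L^p(\Omega))$ with $q=\infty$. Your other bootstrap route would not start as written, because H\"older with only $v\in L^1$ gives $\frac1s=\frac1p+1>1$, so it would need a better first step such as $\int_\Omega uv\le(\int_\Omega u^pv)^{1/p}(\int_\Omega v)^{(p-1)/p}$; the \eqref{Lpuv-2} route is the right choice.
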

\begin{proof}
   By the definition of $\mu^*$, the continuity of $\mu_0(\cdot)$ and
the assumption $\mu > \mu^* = \mu_0(n)$, we can find some 
$p > n$ such that $\mu > \mu_0(p)$. Therefore, if either $T_{\rm max} < \infty$ 
or condition \eqref{B2} holds, we may apply Lemma \ref{Lpuv} to obtain
\begin{align}
    \label{Lp.1}
    \int_\Omega u^p(\cdot,t)\, v(\cdot,t) \le c_1 
    \qquad \text{for all } t \in (0,T_{\rm max}),
\end{align}
and
\begin{align}
    \label{Lp.2}
    \left \| \nabla v^{\frac{1}{2p}}(\cdot,t) \right \|_{L^{2p}(\Omega)} \le c_1 
    \qquad \text{for all } t \in (0,T_{\rm max}),
\end{align}
where $c_1 > 0$ depends on $T_{\rm max}$ when $T_{\rm max} < \infty$, 
and is independent of $T_{\rm max}$ when \eqref{B2} is fulfilled. In view of Lemma \ref{low}, we infer that 
    \begin{align*}
       \min_{x\in \bar{\Omega}} v(x,t) \geq \delta \qquad \text{for all }t\in (0,T_{\rm max}),
    \end{align*}
    where $\delta =e^{-T_{\rm max}} \inf_{x\in \Omega }v_0(x)>0$ if $T_{\rm max}< \infty$ and $\delta>0$ is independent of $T_{\rm max}$ if \eqref{B2} holds. This, together with \eqref{Lp.1} implies that 
    \begin{align} \label{Lp.3}
          \int_\Omega u^p(\cdot,t) \le \frac{1}{\delta} \int_\Omega u^p(\cdot,t)v(\cdot,t) \leq \frac{c_1}{\delta}
    \qquad \text{for all } t \in (0,T_{\rm max}).
    \end{align}
    Because $2p> n$, we invoke the Sobolev's embedding $W^{1,2p}(\Omega) \hookrightarrow  L^\infty(\Omega)$, together with \eqref{L1est-2} to deduce that 
    \begin{align*}
        \left \| v^{\frac{1}{2p}}(\cdot,t) \right \|_{L^\infty(\Omega)} \leq \left \| \nabla v^{\frac{1}{2p}}(\cdot,t) \right \|_{L^{2p}(\Omega)} + \left \| v^\frac{1}{2p}(\cdot,t) \right \|_{L^{2p}(\Omega)} \leq c_2 \qquad \text{for all } t \in (0,T_{\rm max}),
    \end{align*}
    where $c_2=c_1+ \sup_{t\in (0,T_{\rm max})}\left \| v(\cdot,t) \right \|^{\frac{1}{2p}}_{L^1(\Omega)}$, which further entails that 
    \begin{align} \label{Lp.4}
        \left \| v(\cdot,t) \right \|_{L^\infty(\Omega)} \leq c_2^{2p}\qquad \text{for all } t \in (0,T_{\rm max}).
    \end{align}
    From \eqref{Lp.3}, \eqref{Lp.4} and \eqref{B}, we find that 
    \begin{align*}
       \left \| u(\cdot,t)v(\cdot,t)+B_2(\cdot,t) \right \|_{L^p(\Omega)}  \leq \left \| v(\cdot,t) \right \|_{L^\infty(\Omega)} \left \| u(\cdot,t) \right \|_{L^p(\Omega)} +  \left \| B_2 \right \|_{L^\infty(\Omega\times (0, \infty))}\leq c_3\qquad \text{for all } t \in (0,T_{\rm max}).\,
    \end{align*}
    where $c_3=c_2^{2p}c_1^{\frac{1}{p}}\delta^{-\frac{1}{p}}+\left \| B_2 \right \|_{L^\infty(\Omega\times (0, \infty))}$. This enables us to apply Lemma \ref{C52.Para-Reg} with $f=uv+B_2$ to deduce that 
    \begin{align*}
     \left \| v (\cdot,t) \right \|_{W^{1, \infty}(\Omega)}  \leq C \qquad \text{for all }t\in (0,T_{\rm max}),
    \end{align*}
    which, together with \eqref{Lp.3} completes the proof.
\end{proof}
We are now in position to prove our main result.
\begin{proof}[Proof of Theorem \ref{thm1}]
   We argue by contradiction. Suppose that the solution is not global, i.e. \(T_{\max} < \infty\).  From Lemma \ref{low}[\eqref{low-1}], we can find $c_1=c_1(T_{\rm max})>0$ such that 
     \begin{align} \label{prf.3}
        \left \| \frac{1}{v (\cdot,t) }\right \|_{L^{\infty}(\Omega)}  \leq c_1 \qquad \text{for all }t\in (0,T_{\rm max}).
    \end{align}
    Then we apply Lemma \ref{Lp} to deduce that there exists some $p_0>n$ such that  
    \begin{align} \label{prf.1}
        \int_\Omega u^{p_0}(\cdot,t) \leq c_2 \qquad \text{for all }t\in (0,T_{\rm max}),
    \end{align}
    and 
    \begin{align} \label{prf.2}
        \left \| v (\cdot,t) \right \|_{W^{1, \infty}(\Omega)}  \leq c_2 \qquad \text{for all }t\in (0,T_{\rm max}),
    \end{align}
    where $c_2=c_2(T_{\rm max})>0$. Now,  multiplying the first equation by $u^{p-1}$ with $p>1$, integrating by parts, applying Young's inequality and using \eqref{prf.3} and \eqref{prf.2}, we infer that 
    \begin{align*}
       \frac{1}{p} \frac{d}{dt} \int_\Omega u^p &=-(p-1)\int_\Omega u^{p-2}|\nabla u|^2+ \chi(p-1)\int_\Omega u^{p-1} \nabla u \cdot \frac{\nabla v}{v} - \int_\Omega u^pv +\int_\Omega B_1 u^{p-1} +r\int_\Omega u^p - \mu \int_\Omega u^{p+1} \notag \\
       &\leq -\frac{p-1}{2}\int_\Omega u^{p-2}|\nabla u|^2+ \frac{\chi^2(p-1)}{2} \int_\Omega u^p \frac{|\nabla v|^2}{v^2} + \left \| B_1 \right \|_{L^\infty \left ( \Omega \times (0, \infty) \right )} \int_\Omega u^{p-1} +r\int_\Omega u^p \notag \\
       &\leq -\frac{p-1}{2}\int_\Omega u^{p-2}|\nabla u|^2+ \left (r+ \frac{\chi^2(p-1)c_1^2c_2^2}{2} \right ) \int_\Omega u^p  + \left \| B_1 \right \|_{L^\infty \left ( \Omega \times (0, \infty) \right )} \int_\Omega u^{p-1}
    \end{align*}
    for all $t\in (0,T_{\rm max})$. Now, by applying standard Moser's iteration method  (see \cite{Alikakos1, Alikakos2, Winkler-2011}), we deduce that  
    \begin{align} \label{prf.4}
        \left \| u (\cdot,t) \right \|_{L^{ \infty}(\Omega)}  \leq c_3 \qquad \text{for all }t\in (0,T_{\rm max}),
    \end{align}
    for some $c_3=c_3(T_{\rm max})>0$. Combining \eqref{prf.3}, \eqref{prf.2} and \eqref{prf.4}, we arrive at 
    \begin{align*}
         \left \| u (\cdot,t) \right \|_{L^{ \infty}(\Omega)}+ \left \| v (\cdot,t) \right \|_{W^{1, \infty}(\Omega)} + \left \| \frac{1}{v (\cdot,t) }\right \|_{L^{\infty}(\Omega)} \leq c_4\qquad \text{for all }t\in (0,T_{\rm max}),
    \end{align*}
    where $c_4=c_1+c_2+c_3$. This, however, contradicts the extensibility criterion stated in \eqref{local-1}. Hence, the assumption \(T_{\max} < \infty\) is false, and therefore \(T_{\max} = \infty\). Now, suppose that \eqref{B2} holds, then by invoking \eqref{low-2} and Lemma \ref{Lp}, we infer that the estimates \eqref{prf.3}, \eqref{prf.1} and \eqref{prf.2} hold with positive constants $c_1,c_2$ and $c_3$ independent of $T_{\rm max}$. Hence, we deduce that \eqref{prf.4} also holds with $c_4>0$ independent of $T_{\rm max}$. Therefore, we obtain 
    \begin{align*}
        \sup_{t>0}  \left \{ \left \| u (\cdot,t) \right \|_{L^{ \infty}(\Omega)}+ \left \| v (\cdot,t) \right \|_{W^{1, \infty}(\Omega)} \right \} < \infty,
    \end{align*}
    which finishes the proof.
\end{proof}

\section*{Declarations}
\paragraph{Conflict of Interest} The author declares that they have no conflict of interest.
\paragraph{Acknowledgments} Minh Le was supported by the Hangzhou Postdoctoral Research Grant.

     \paragraph{Data Availability}
 Data sharing not applicable to this article as no datasets were generated or analyzed during
the current study.

\end{document}